\pgfplotsset{compat=newest}
\colorlet{color1}{blue}
\colorlet{color2}{red!50!black}
\definecolor{ivory}{RGB}{218,215,203}
\definecolor{cuhkp}{RGB}{98,56,105} 	% purple dark
\definecolor{cuhkpl}{RGB}{152,24,147} 	% purple light
\definecolor{cuhkb}{RGB}{219,160,1} 	% ocher
\definecolor{cuhkbd}{RGB}{178,129,0} 	% ocher dark
\definecolor{cuhkr}{RGB}{88,35,155}  	% magenta-red
\definecolor{darkblue}{rgb}{0,0.08,0.45} %dark blue
\definecolor{bluep}{RGB}{0,128,255}
\newcommand{\revise}[1]{{\color{black}{#1}}}
\crefname{section}{section}{sections}
\crefname{subsection}{subsection}{subsections}
\Crefname{figure}{Figure}{Figures}
\theoremstyle{plain}
\newtheorem{theorem}{Theorem}
\newtheorem{lemma}{Lemma}
\newtheorem{corollary}{Corollary}
\newtheorem{definition}{Definition}
\newtheorem{fact}{Fact}
\theoremstyle{definition}
\DeclareMathOperator*{\argmin}{argmin}
\newcommand{\R}{\mathbb{R}}
\newcommand{\Exp}{\mathbb{E}}
\newcommand{\dist}{\operatorname{dist}}
\newcommand{\proj}{\operatorname{proj}}
\newcommand{\sL}{{\sf L}}
\newcommand{\SubGrad}{\mathsf{SubGrad}}
\newcommand{\ProxSubGrad}{\mathsf{Prox}\text{-}\mathsf{SubGrad}} 
\newcommand{\TSM}{\mathsf{TSM}} 
\newcommand{\SSM}{\mathsf{SSM}}
\newcommand{\be}{\begin{equation}}
\newcommand{\ee}{\end{equation}}
\newcommand\prox{{\operatorname{prox}}}
\newcommand{\calA}{\mathcal{A}}
\newcommand{\calB}{\mathcal{B}}
\newcommand{\calC}{\mathcal{C}}
\newcommand{\calD}{\mathcal{D}}
\newcommand{\calH}{\mathcal{H}}
\newcommand{\calO}{\mathcal{O}}
\newcommand{\calS}{\mathcal{S}}
\newcommand{\calU}{\mathcal{U}}
\newcommand{\calV}{\mathcal{V}}
\newcommand{\calX}{\mathcal{X}}
\newcommand{\setR}{\mathbb{R}}
\newcommand{\en}{\begin{equation*}}
\newcommand{\een}{\end{equation*}}
\title{Revisiting Subgradient Method:  Complexity and Convergence Beyond Lipschitz Continuity\thanks{Xiao Li  was supported in part by the National Natural Science Foundation of China (NSFC) under Grant No. 12201534 and in part by the Shenzhen Science and Technology Program under Grant No. RCYX20221008093033010. Lei Zhao and Daoli Zhu were partially supported by the National Key R\&D Program of China (Grant No. 2023YFA0915202); the Major Project of the National Natural Science Foundation of China (Grant No.72293582); the Fundamental Research Funds for the Central Universities (the Interdisciplinary Program of Shanghai Jiao Tong University) (Grant No. YG2024QNA36). Lei Zhao was also partially supported by the Startup Fund for Young Faculty at SJTU (SFYF at SJTU) (Grant No. 22X010503839). Anthony Man-Cho So was supported in part by the Hong Kong Research Grants Council (RGC) General Research Fund (GRF) project CUHK 14204823.}}
\author{
	Xiao Li\\
	School of Data Science\\
	The Chinese University of Hong Kong, Shenzhen\\
	Shenzhen, China\\
	\texttt{lixiao@cuhk.edu.cn} 
	\And
    Lei Zhao\\
    Institute of Translational Medicine \\
    National Center for Translational Medicine\\
    Shanghai Jiao Tong University\\
    Shanghai, China\\
    \texttt{l.zhao@sjtu.edu.cn} 
    \And
    Daoli Zhu\\
    Antai College of Economics and Management \\
    Shanghai Jiao Tong University\\
    Shanghai, China\\
    School of Data Science\\
    The Chinese University of Hong Kong, Shenzhen\\
    Shenzhen, China\\
    \texttt{dlzhu@sjtu.edu.cn}
    \And
    Anthony Man-Cho So \\
     Department of Systems Engineering and Engineering Management \\
    The Chinese University of Hong Kong \\ 
    \texttt{manchoso@se.cuhk.edu.hk} 
}
\begin{document}

\maketitle

\begin{abstract}
	The subgradient method is one of the most fundamental algorithmic schemes for nonsmooth optimization. The existing complexity and convergence results for this method are mainly derived for Lipschitz continuous objective functions. In this work, we first extend the typical iteration complexity results for the subgradient method to cover non-Lipschitz convex and weakly convex minimization. Specifically, for the convex case, we can drive the suboptimality gap to below $\varepsilon$ in $\mathcal{O}( \varepsilon^{-2} )$ iterations; for the weakly convex case, we can drive the gradient norm of the Moreau envelope of the objective function to below $\varepsilon$ in $\mathcal{O}( \varepsilon^{-4} )$ iterations. Then, we provide convergence results for the subgradient method in the non-Lipschitz setting when proper diminishing rules on the step size are used. In particular, when $f$ is convex, we establish an $\calO(\log(k)/\sqrt{k})$ rate of convergence in terms of the suboptimality gap, where $k$ represents the iteration count. With an additional quadratic growth property, the rate is improved to $\calO(1/k)$ in terms of the squared distance to the optimal solution set.  When $f$ is weakly convex, asymptotic convergence is established. Our results neither require any modification to the subgradient method nor impose any growth condition on the subgradients, while our analysis is surprisingly simple. The central idea to deriving the aforementioned results is that the dynamics of the step size rule fully controls the movement of the subgradient method, which leads to the boundedness of the iterates and allows a trajectory-based analysis to be conducted to establish the desired results. To further illustrate the wide applicability of our framework, we extend the aforementioned iteration complexity results to cover the truncated subgradient, the stochastic subgradient,  and the proximal subgradient methods for non-Lipschitz convex / weakly convex objective functions.
\end{abstract}

\section{Introduction}\label{sec:intro}

Let $f:\mathbb{R}^d \rightarrow \mathbb{R}$ be a convex / weakly convex function (i.e., $f(\cdot) + \tfrac{\rho}{2} \| \cdot \|^2$ is convex for some $\rho \ge 0$), which can be nonsmooth. Consider the following problem: 
\begin{equation}\label{eq:problem}
	\min_{x\in \R^d} \ f(x).
\end{equation}
Throughout this work, we use $\calX^*$ to denote the set of optimal solutions to problem \eqref{eq:problem}. We assume that $\calX^*$ is non-empty and use $f^*$ to denote the optimal value of problem \eqref{eq:problem}. Also, we use $\partial f(x)$ and $g(x) \in \partial f(x)$ to denote the subdifferential and a subgradient of $f$ at $x$, respectively. We defer the precise definition of the subdifferential to \Cref{sec:prelimilary}.  In this work, we are interested in applying the subgradient method (hereafter, $\SubGrad$) for solving problem \eqref{eq:problem}.  $\SubGrad$ is one of the most fundamental optimization schemes for addressing nonsmooth optimization problems. It proceeds by picking an arbitrary subgradient $g(x^k)\in \partial f(x^k)$ in the $k$-th iteration and searching along the negative direction of the selected subgradient for updating, namely,
\[
	x^{k+1} =x^{k} - \alpha_{k}g(x^k), 
\]
where $\alpha_{k}>0$ is the step size.

\paragraph{The context and our goal.} The iteration complexity and convergence properties of $\SubGrad$ have been extensively studied in the setting where the objective function $f$ is Lipschitz continuous. In the convex case, the standard iteration complexity bound for $\SubGrad$ is given by 
\be\label{eq:Lip cvx complexity}
  f(\tilde x^T) - f^* \leq \calO\left(\frac{1}{\sqrt{T}}\right),
\ee
where the big-O hides constants like the Lipschitz constant of $f$ and the initial suboptimality gap $f(x^0) - f^*$, $T$ is the pre-determined total number of iterations, and $\tilde x^T$ is certain weighted average of the first $T$ iterates. In the weakly convex case, we have the following result for $\SubGrad$ \cite{davis2019stochastic}:
\be\label{eq:Lip wcvx complexity}
   \min_{k = 0,\ldots, T} \ \|\nabla f_\lambda(x^k)\|^2 \leq  \calO\left(\frac{1}{\sqrt{T}}\right).
\ee
Here, $f_\lambda$ is the Moreau envelope of $f$ with parameter $\lambda >0$ (see \Cref{sec:prelimilary} for details). As it turns out, the gradient of $f_\lambda$ can serve as a surrogate stationarity measure for $f$ \cite{davis2019stochastic}. 

It should be noted that the Lipschitz continuity assumption on the objective function $f$ is crucial to the derivations of the complexity results \eqref{eq:Lip cvx complexity} and \eqref{eq:Lip wcvx complexity}. Indeed, such an assumption, together with some diminishing step size rule or a small constant step size,  is used to bound the error term caused by the non-descent nature of $\SubGrad$. However, many applications give rise to objective functions that are not Lipschitz continuous. For example, the SVM \cite{cortes1995support} amounts to solving the problem
\be\label{eq:svm}
\min_{x\in\R^d}\ \left\{f(x) = \frac{1}{n}\sum_{i=1}^n\max\left\{0,1-b_i(a_i^{\top}x)\right\}+\frac{\kappa}{2}\|x\|^2\right\},
\ee
where $\kappa>0$, $a_i\in\R^d$, and $b_i\in\{\pm1\}$ for $i=1,...,n$ are given. It can be easily verified that the objective function $f$ in \eqref{eq:svm}, though convex, is not Lipschitz continuous on $\R^d$.  Indeed, most $\ell_2$-regularized convex learning problems do not satisfy the Lipschitz continuity assumption. 
Another example is the robust matrix sensing problem \cite{li2020nonconvex}
\be
\min_{X\in \R^{d\times r}} \ \left\{ f(X) = \frac{1}{n}\|y - \calA(XX^\top) \|_1\right\},
\label{eq:rms}
\ee
where $y\in \R^n$ is a measurement vector and $\calA:\R^{d\times d} \rightarrow \R^n$ is a linear measurement operator consisting of a set of sensing matrices $A_1,\ldots,A_n \in \R^{d\times d}$. The matrix sensing problem \eqref{eq:rms}, whose objective function $f$ can be verified to be weakly convex but not Lipschitz continuous, encapsulates the (real-valued) robust phase retrieval problem, the robust quadratic sensing problem, and certain robust PCA variants. 

Motivated by the above discussion, our goal in this work is to establish the iteration complexity and convergence of $\SubGrad$ for both convex and weakly convex minimization \emph{without the Lipschitz continuity assumption}. 

%
%\begin{algorithm}[t]
%	\caption{$\SubGrad$: Subgradient method for minimizing $f$ over $\R^d$}
%	{\bf Input:}  Initial point $x^0$.
%	\begin{algorithmic}[1]
%		\For{$k=0,1,\ldots$}
%		\State Compute a subgradient $g(x^k)\in\partial f(x^k)$.
%		\State Update the step size $\alpha_{k}$ according to a certain rule.
%		\State Update  $x^{k+1} =x^{k} - \alpha_{k}g(x^k)$.
%		\EndFor
%	\end{algorithmic}
%	\label{alg:RCS}
%\end{algorithm}

\subsection{New Iteration Complexity Bounds for the Subgradient Method}

One important element in our development is the step size rule, which in the $k$-th iteration takes the form
\[
     \alpha_k = \frac{\beta_k}{\|g(x^k)\|}
\]
for some $\beta_k>0$, where $g(x^k)$ is a subgradient of $f$ at $x^k$. Such a rule is ubiquitous in both the implementation and theoretical analysis of $\SubGrad$; see, e.g.,  \cite{Shor85,CohenZ,Pol87,renegar2016,nesterov2018lectures,grimmer2018,grimmer2019}. The idea behind this normalization is the fact that the norm of the subgradient, i.e., $\|g(x^k)\|$, is not very informative in nonsmooth optimization.  In the case where $\| g(x^k) \| = 0$, we know that $x^k$ is a stationary point of $f$. Thus, the above step size rule is well defined. 

We first present our complexity result for the convex case. The following theorem generalizes the existing complexity result \eqref{eq:Lip cvx complexity}.
\begin{theorem}[complexity of $\SubGrad$ for convex case]\label{theo:complexity_cvx} Suppose that $f$ in \eqref{eq:problem} is convex and the step sizes  $\{\alpha_k\}_{k= 0}^T$ satisfy 
	\be\label{eq:constant step size}
     	\alpha_k = \frac{\beta_k}{\|g(x^k)\|}  \quad \text{with} \quad \beta_k= \frac{c}{\sqrt{T+1}} \quad \text{for} \ k= 0,1,\ldots, T,
	\ee
	where $c>0$ is some constant and $T$ is the pre-determined total number of iterations. 
	Let $\tilde{x}^T=\frac{1}{T+1}\sum_{k=0}^T x^k$ be the averaged iterate and $x^*\in \calX^*$ be some fixed optimal solution to problem \eqref{eq:problem}. Then, the trajectory $\{x^k\}_{k=0}^T$ lies in the ball $ \calA := \{x\in\R^d: \|x-x^*\|^2 \leq \|x^{0}-x^*\|^2+c^2\}$. Consequently, for any bounded open convex set $\mathcal{A}'$ that contains $\calA$, we have
	\[
	f(\tilde{x}^{T})-f^*\leq \frac{\sL_{\mathcal {A}'}\left(\|x^0-x^*\|^2/c + c\right)}{2\sqrt{T+1}},
	\]
	where ${\sf L}_{\mathcal {A}'}>0$ is the Lipschitz constant\footnote{Recall that all finite-valued convex and weakly convex functions are locally Lipschitz; see, e.g., \cite[Proposition 4.4]{Vial83}.\label{foot:local lip}} of $f$ on $\mathcal{A}'$.
\end{theorem}

The $\calO(1/\sqrt{T})$ iteration complexity established in \Cref{theo:complexity_cvx}, which does not require the Lipschitz continuity assumption, matches the one that requires such an assumption. If $f$ is Lipschitz continuous with parameter $\sL$, then we can simply choose $\sL_{\mathcal A'}= \sL$. This recovers the standard complexity result \eqref{eq:Lip cvx complexity} of $\SubGrad$ for minimizing convex Lipschitz functions. We remark that the result in \Cref{theo:complexity_cvx} is not entirely new. It establishes a complexity bound similar to those presented in \cite[Theorem 3.2.2]{nesterov2018lectures} and \cite[Section 1.1]{grimmer2019} in the non-Lipschitz convex setting. Nonetheless, it is important to highlight that our approach to proving this result is different from and simpler than those in \cite{nesterov2018lectures,grimmer2019}. This enables us to derive new result for the weakly convex case, which we now explain. We refer the reader to \Cref{sec:related works} for further discussion.

Unlike in the convex case where we do not need any additional assumption on the objective function, in the weakly convex case we assume that the objective function has \emph{bounded sublevel sets}, i.e., the set $\calS_{\leq s}: = \{x\in \R^d: f(x) \leq s\}$ is bounded for any $s\in \R$. We note that the bounded sublevel sets assumption is satisfied by many weakly convex minimization problems such as the robust matrix sensing problem \eqref{eq:rms} and its variants, as their objective functions are coercive.  Therefore, it can be regarded as a mild assumption. The following is our new iteration complexity bound for $\SubGrad$ when utilized to minimize weakly convex functions. It generalizes the existing complexity result \eqref{eq:Lip wcvx complexity} by removing the Lipschitz continuity assumption on $f$.
  
 \begin{theorem}[complexity of $\SubGrad$ for weakly convex case]\label{theo:complexity wcvx} Suppose that $f$ in \eqref{eq:problem} is $\rho$-weakly convex and has bounded sublevel sets. Suppose further that the step sizes $\{\alpha_k\}_{k= 0}^T$ satisfy the conditions in \eqref{eq:constant step size}. Then, the trajectory $\{x^k\}_{k=0}^T$ lies in the bounded sublevel set $\calB := \{x\in \R^d: f_\lambda (x) \leq f_{\lambda}(x^0) + c^2/2\lambda\}$. Consequently, for any bounded open convex set $\mathcal{B}'$ that contains $\mathcal{B}$, we have 
 	\be\label{eq:wcvx complexity}
 	\min_{0\leq k\leq T}\|\nabla f_{\lambda}(x^k)\|^2 \leq \frac{\sL_{\mathcal{B}'}\left(f_{\lambda}(x^0)-f^*+ c^2 / 2\lambda \right)}{c(1-\lambda\rho)\sqrt{T+1}},
 	\ee
 	where  $\sL_{\mathcal{B}'} > 0$ is the Lipschitz constant of $f$ on $\calB'$. 
 \end{theorem}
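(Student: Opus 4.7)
The plan is to derive a descent-type inequality for the Moreau envelope $f_\lambda$ along the iterates, exploit the normalized step size to decouple the error term from $\|g(x^k)\|$, and then close the argument via a trajectory-based analysis on the sublevel set $\calB$. I would begin with the standard tool: letting $\hat{x}^k := \prox_{\lambda f}(x^k)$, the definition of $f_\lambda$ as a pointwise minimum immediately gives $f_\lambda(x^{k+1}) \leq f(\hat{x}^k) + \frac{1}{2\lambda}\|\hat{x}^k - x^{k+1}\|^2$. Expanding $\|\hat{x}^k - x^{k+1}\|^2$ using the update rule yields a cross term $\langle g(x^k), \hat{x}^k - x^k\rangle$. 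Combining the $\rho$-weak convexity inequality $f(\hat{x}^k) - f(x^k) \geq \langle g(x^k), \hat{x}^k - x^k\rangle - \frac{\rho}{2}\|\hat{x}^k - x^k\|^2$ with the elementary bound $f(x^k) \geq f_\lambda(x^k) = f(\hat{x}^k) + \frac{1}{2\lambda}\|\hat{x}^k - x^k\|^2$ should give $\langle g(x^k), x^k - \hat{x}^k\rangle \geq \frac{1-\lambda\rho}{2\lambda}\|\hat{x}^k - x^k\|^2$. Substituting back and using $\nabla f_\lambda(x^k) = \frac{1}{\lambda}(x^k - \hat{x}^k)$ produces the descent relation
\[
f_\lambda(x^{k+1}) \leq f_\lambda(x^k) - \frac{\alpha_k(1-\lambda\rho)}{2}\|\nabla f_\lambda(x^k)\|^2 + \frac{\alpha_k^2\|g(x^k)\|^2}{2\lambda}.
\]

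The decisive observation, and what replaces global Lipschitz continuity, is that the normalized step size turns the error term into $\alpha_k^2\|g(x^k)\|^2 = \beta_k^2$, independently of $\|g(x^k)\|$. Dropping the nonpositive middle term then yields $f_\lambda(x^{k+1}) \leq f_\lambda(x^k) + \beta_k^2/(2\lambda)$, and telescoping with $\sum_{k=0}^T \beta_k^2 = c^2$ certifies $f_\lambda(x^k) \leq f_\lambda(x^0) + c^2/(2\lambda)$ for every $k = 0, \ldots, T+1$, i.e., the iterates remain in $\calB$. Bounded sublevel sets of $f$ transfer to bounded sublevel sets of $f_\lambda$ (any $y$ attaining the infimum in the definition of $f_\lambda(x)$ lies in a bounded sublevel set of $f$ and is close to $x$), so $\calB$ is bounded and the local Lipschitz constant $\sL_\calB$ is finite; consequently $\|g(x^k)\| \leq \sL_\calB$ for every $k \leq T$.

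Armed with the uniform bound $\|g(x^k)\| \leq \sL_\calB$, I would return to the full descent inequality, replace $\alpha_k = \beta_k/\|g(x^k)\|$ by its lower bound $\beta_k/\sL_\calB$ in the negative term, telescope from $0$ to $T$, use $f_\lambda(x^{T+1}) \geq f^*$, divide by $T+1$, and plug in $\beta_k = c/\sqrt{T+1}$ to extract the $\min$-type bound. The only delicate point is the apparent circularity in the containment step: the classical Lipschitz analysis assumes a subgradient bound to localize the iterates, whereas here the iterates must first be localized to produce such a bound. The normalized step size breaks this chicken-and-egg by making the a priori growth of $f_\lambda$ depend only on $\beta_k$, so the sublevel-set containment is established \emph{before} any Lipschitz-type control is invoked, which is precisely the trajectory-based mechanism that substitutes for global Lipschitz continuity in the weakly convex setting.
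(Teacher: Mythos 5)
Your proposal follows essentially the same route as the paper: establish a one-step bound on $f_\lambda(x^{k+1})$ whose error term is $\beta_k^2/(2\lambda)$ thanks to the normalized step size, telescope to confine $\{x^k\}_{k=0}^T$ to the sublevel set $\calB$, transfer bounded sublevel sets from $f$ to $f_\lambda$, invoke local Lipschitz continuity on $\calB$ to get $\|g(x^k)\|\leq \sL_\calB$, and only then extract the rate. The one substantive discrepancy is in the descent coefficient: you lower-bound $f(x^k)-f(\widehat{x}^k)$ by $\frac{1}{2\lambda}\|x^k-\widehat{x}^k\|^2$ using only the definition of the envelope, which after combining with weak convexity yields the coefficient $\frac{\alpha_k(1-\lambda\rho)}{2}$ in front of $\|\nabla f_\lambda(x^k)\|^2$; the paper additionally uses the sharper inequality $f(x^k)-f_\lambda(x^k)\geq \frac{1-\lambda\rho}{2\lambda}\|x^k-\widehat{x}^k\|^2$, which doubles this coefficient to $\alpha_k(1-\lambda\rho)$ and is what produces the constant stated in \eqref{eq:wcvx complexity}. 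As written, your argument therefore proves the bound with an extra factor of $2$ on the right-hand side --- same rate, correct mechanism, but not the exact constant claimed; replacing your crude envelope bound with the first property in \eqref{eq:envelope properties} closes this gap.
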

%{\color{red}The idea behind the proof of \Cref{theo:complexity wcvx} is similar to that of \Cref{theo:complexity_cvx}. The step size rule enables us to show that the sequence of Moreau envelope function values is uniformly bounded within $T$ iterations. Then, the bounded sublevel sets assumption implies that the sequence $\{x^k\}_{k=0}^T$ lies in the bounded set $\calB$. Then, conducting the trajectory-based analysis yields the result.}

The proofs of \Cref{theo:complexity_cvx} and \Cref{theo:complexity wcvx} can be found in \Cref{sec:SubGrad}.

\paragraph{Convergence results using diminishing step size rules.}  Our analysis is not tailored to the constant choice of $\beta_k$ in \eqref{eq:constant step size}. Actually, we can also utilize diminishing step size rules satisfying $\sum_{k=0}^{\infty} \beta_k^2 <\infty$. One immediate benefit is that we can now show the boundedness of $\{x^k\}$ for all $ k\geq 0$ rather than within the pre-determined $T$ iterations. Therefore, we are able to establish convergence results by properly diminishing $\beta_k$ for non-Lipschitz convex and weakly convex objective functions. To be more specific, for the convex case, we establish an $\calO(\log(k)/\sqrt{k})$ convergence rate in terms of the suboptimality gap  and an $\calO(1/k)$ convergence rate in terms of the squared distance to the optimal solution set if the objective function possesses an additional quadratic growth condition. For the weakly convex case, we prove the asymptotic convergence result $\lim_{k\rightarrow\infty}\|\nabla f_\lambda(x^k)\| = 0$. These convergence results are discussed in detail in \Cref{sec:convergence}. We note that in the weakly convex case, we do not have a convergence rate result. This is expected as no convergence rate result exists even in the Lipschitz continuous setting. We leave the convergence rate analysis of $\SubGrad$ in the weakly convex case as a future work.

\paragraph{Our idea and extensions.}  The conventional analysis of $\SubGrad$ typically bounds the norm of the subgradient used in each iteration by the Lipschitz constant of the objective function $f$. By contrast, the main challenge in our analysis is the absence of a \emph{global} bound on the subgradient norms, rendering the existing analysis inapplicable. To address this issue, we first observe that the step size normalization scheme allows us to derive an initial recursion of $\SubGrad$ whose ascent / error term is independent of the subgradient. Then, by  combining the dynamics of $\beta_k$ with the convexity or weak convexity of $f$, we can establish the boundedness of the sequence $\{x^k\}$. This reveals that a  proper step size rule completely controls the movement of $\SubGrad$ and eliminates the pathological cases where $\{x^k\}$ tends to be unstable. To the best of our knowledge, while a boundedness result has previously been derived for the convex case (see, e.g., \cite{alber1998projected}), we provide a new analogous result for the weakly convex case. Based on this boundedness result, we are able to establish a standard recursion for the trajectory generated by $\SubGrad$, which finally leads to the aforementioned complexity and convergence results. Specifically, consider any bounded open convex set $\calU$ that contains the entire trajectory $\{x^k\}$. For a convex / weakly convex objective function $f$, which is automatically Lipschitz continuous over the bounded set $\calU$ with some parameter $\sL_{\mathcal{U}}>0$ (again, see \Cref{foot:local lip}), we can derive the following upper bound for the subgradients $\{g(x^k)\}$ used in the trajectory $\{x^k\}$:
\be\label{eq:subgrad bound}
    \|g(x^k)\|\leq \sL_{\mathcal{U}}, \quad \forall k\geq 0.
\ee
Plugging this important upper bound into the aforementioned initial recursion yields the standard recursion for the iterates of $\SubGrad$. Finally, standard arguments lead to our iteration complexity and convergence results. The bound \eqref{eq:subgrad bound} is based on the following classic result in variational analysis:
\begin{fact}[Theorem 9.13 of \cite{Rockafellar}] \label{fact}
	For any lower semicontinuous function $h$, the Lipschitz continuity parameter $\sL_\calV$ of $h$ over an open convex set $\calV$ coincides with $\sup \{\|g(x)\|: x\in \calV, g(x) \in \partial h(x)\}$.
\end{fact}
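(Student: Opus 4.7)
The plan is to prove the equality $\sL_\calV = M$, where I write $M := \sup\{\|g(x)\|: x\in \calV, g(x) \in \partial h(x)\}$, by establishing the two inequalities $M \le \sL_\calV$ and $\sL_\calV \le M$ separately. The first is essentially a limiting argument from the definition of the subdifferential, while the second requires a nonsmooth mean value inequality along segments.

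For the direction $M \le \sL_\calV$, I would fix $x \in \calV$, $g \in \partial h(x)$, and a unit vector $v$, and combine the definition of the (limiting) subdifferential with the assumed $\sL_\calV$-Lipschitz property of $h$ on $\calV$. Concretely, I would approximate $g$ by regular subgradients $g_n$ at nearby points $x_n \to x$, use the regular subdifferential inequality $h(x_n + tv) \ge h(x_n) + t\langle g_n, v\rangle + o(t)$ together with $|h(x_n + tv) - h(x_n)| \le \sL_\calV\, t$ (valid for $t$ small so that $x_n + tv \in \calV$), divide by $t$, let $t \downarrow 0$ and then $n\to\infty$ to obtain $\langle g, v\rangle \le \sL_\calV$. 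Maximizing over unit $v$ then yields $\|g\| \le \sL_\calV$, and taking the supremum over $x$ and $g \in \partial h(x)$ gives $M \le \sL_\calV$.

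For the direction $\sL_\calV \le M$, I assume $M < \infty$ (otherwise there is nothing to prove) and fix arbitrary $x, y \in \calV$. Since $\calV$ is open and convex, the entire segment $[x,y]$ lies in $\calV$. The heart of the argument is a nonsmooth mean value inequality: there exist $\bar t \in (0,1)$ and $g \in \partial h(x + \bar t (y-x))$ such that
\[
h(y) - h(x) \;\le\; \langle g,\, y - x\rangle \;\le\; \|g\|\,\|y-x\| \;\le\; M\,\|y-x\|.
\]
Exchanging the roles of $x$ and $y$ yields $|h(y) - h(x)| \le M\,\|y-x\|$, which is precisely the bound $\sL_\calV \le M$.

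The main obstacle is justifying the nonsmooth mean value inequality in the generality of lower-semicontinuous $h$. My plan is to apply the Ekeland variational principle to the auxiliary function $\psi(t) := h(x + t(y-x)) - t(h(y) - h(x))$ on $[0,1]$, which vanishes at both endpoints, to obtain an approximate interior minimizer $t_n \in (0,1)$; then invoke the limiting subdifferential chain rule along the line segment to extract a subgradient of $h$ at $x + t_n(y-x)$ realizing the desired inner-product estimate; and finally pass to the limit, using the compactness of $[0,1]$ and the closedness of the limiting subdifferential graph. A technical subtlety I would need to dispense with first is that finiteness of $M$ combined with the definition of $\partial h$ forces $h$ to be finite and continuous on $\calV$, so that $h(y) - h(x)$ is a meaningful finite quantity and the Ekeland construction is applicable along the segment.
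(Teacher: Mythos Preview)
The paper does not prove this statement: it is recorded as a \emph{Fact}, attributed to \cite[Theorem~9.13]{Rockafellar}, and invoked as a black box in the trajectory-based arguments (see the discussion around \eqref{eq:subgrad bound}). There is therefore no in-paper proof against which to compare your proposal.

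That said, your two-inequality outline is the standard route and is essentially how the result is established in Rockafellar--Wets. One remark on scope: as phrased in the paper, the statement presupposes that $h$ is Lipschitz on $\calV$ (it speaks of ``the Lipschitz continuity parameter $\sL_\calV$''), so $h$ is automatically finite and continuous on $\calV$, and your closing ``technical subtlety'' about deducing finiteness from $M<\infty$ is unnecessary for what the paper actually asserts and uses. Under that reading, the direction $\sL_\calV \le M$ follows directly from the nonsmooth mean value inequality on segments, and your Ekeland-based plan is a valid (if slightly heavier than needed) way to obtain it. If instead you aim to prove the stronger bi-implication in \cite[Theorem~9.13]{Rockafellar}---that $M<\infty$ already forces $h$ to be Lipschitz on $\calV$---then the finiteness step is indeed the crux, and your proposal flags it but does not say how to carry it out; that gap would need to be filled before the Ekeland argument on $\psi(t)=h(x+t(y-x))-t(h(y)-h(x))$ is even well posed.
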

To the best of our knowledge, our work provides the \emph{first} iteration complexity and convergence analysis of $\SubGrad$ for weakly convex minimization without assuming the Lipschitz continuity of the objective function $f$. For the non-Lipschitz convex case, while there are existing iteration complexity bounds (see \Cref{theo:complexity_cvx} and the discussion following it), our technique is more versatile, enabling the analysis of different methods. As an illustration, we apply our framework to derive new iteration complexity results for different variants of $\SubGrad$ --- including the truncated subgradient, the stochastic subgradient, and the proximal subgradient methods --- when utilized to minimize non-Lipschitz convex / weakly convex objective functions; see \Cref{sec:variants}.

\subsection{Prior Arts}\label{sec:related works}

In this subsection, we provide an overview of the literature on subgradient-type methods when utilized to tackle non-Lipschitz objective functions. 

%Our survey is likely incomplete due to the extremely large volume of existing works on subgradient-type methods. 

There are extensive iteration complexity and convergence results for $\SubGrad$ and its variants. Most of them assume that $f$ is Lipschitz continuous, or equivalently, the subgradients of $f$ are globally bounded from above. Nonetheless, in sharp contrast to recent typical results, early asymptotic convergence analyses of $\SubGrad$ for the convex case, which date back to, e.g., Shor \cite{Shor85}, Polyak \cite{Pol87}, Cohen and Zhu \cite{CohenZ}, and  Alber et al. \cite{alber1998projected}, actually do not impose the Lipschitz continuity assumption. In particular, Shor established certain `hyperplane to optimum distance' convergence result  \cite[Theorem 2.1]{Shor85} and then conducted an involved analysis to show that the sequence of objective function values converges to the optimal value \cite[Theorem 2.2]{Shor85}. Though these results are general enough to cover non-Lipschitz convex functions, they are of asymptotic nature without any rate reported. Later, Nesterov \cite[Theorem 3.2.2]{nesterov2018lectures} and Grimmer  \cite[Section 1.1]{grimmer2019} provided finite-time complexity guarantees for $\SubGrad$ in the convex case based on Shor's `hyperplane to optimum distance' argument.  By crucially utilizing the convexity of the objective function, they transferred the `hyperplane to optimum distance' measure to the suboptimality gap, giving their final complexity results without assuming Lipschitz continuity. Their proofs require fixing an iterate that achieves the minimal `hyperplane to optimum distance' over the first $T$ iterations and then transferring the optimality measure based on this fixed iterate. However, it remains unclear how their results might be generalized to handle the presence of the quadratic growth property or to analyze the proximal version of $\SubGrad$. Compared to these existing results, we propose a different idea that is of plugin nature, therefore --- building on standard analysis --- we can flexibly sharpen these results by invoking additional structures such as strong convexity / quadratic growth, establish similar results for the truncated, stochastic, and proximal subgradient methods, and extend these results to the nonconvex setting. 

There are also several existing works that attempt to relax the Lipschitz continuity assumption by replacing it with less restrictive assumptions,  transforming the non-Lipschitz objective function to some other form, or changing the algorithmic oracle of $\SubGrad$.  Let us now briefly go through some related ones. Cohen and Zhu \cite{CohenZ} proposed a linearly bounded subgradients assumption for convex functions, which generalizes the Lipschitz continuity assumption to allow linear growth of the subgradients and can cover non-Lipschitz convex functions that have a quadratic behavior.  This linearly bounded subgradients assumption was later used by Culioli and Cohen \cite{culioli1990} to analyze the stochastic subgradient method for stochastic convex optimization. Both \cite{CohenZ} and \cite{culioli1990} concern asymptotic convergence properties and do not provide explicit rate result.  Recently, Zhao et al. \cite{zhao2022randomized} imposed this linearly bounded subgradients assumption to derive {iteration} complexity and convergence results for a randomized coordinate subgradient method. To deal with non-Lipschitz convex optimization, {based on the radial transformation framework proposed by Renegar \cite{renegar2016}, Grimmer \cite{grimmer2018} proposed a radial subgradient method, which performs a subgradient step on the radial reformulation of the objective function and then a line search step to maintain feasibility in each iteration}. He then showed that typical {iteration} complexity bounds can be obtained without assuming {the} Lipschitz continuity of the original objective function. On another front, Lu \cite{lu2019} introduced the concept of relative continuity for convex functions, which imposes some relaxed bound rather than a global bound on the subgradients by using a Bregman distance. He then utilized this Bregman distance in algorithm design, resulting in a mirror descent-type algorithm that has a different oracle from $\SubGrad$. {Similar idea was later adopted by Davis et al. \cite{davis2018higher} and Zhou et al.  \cite{zhou2020} for analyzing stochastic model-based methods and online convex optimization algorithms, respectively.} {By contrast, we neither require any modification to $\SubGrad$ nor impose any growth condition on the subgradients.}

\subsection{Basic Notions in Nonsmooth Optimization}\label{sec:prelimilary}
Let us briefly review some basic notions in nonsmooth optimization that will be useful for our subsequent development. For a convex function $\phi:\R^d\rightarrow\R$, its subdifferential is defined as 
\be\label{eq:cvx subdifferential}
\partial \phi(x) := \left\{g \in \R^d: \phi(y)\geq \phi(x)+\langle g,y-x\rangle, \forall y \in \R^d \right\}.
\ee
Any element  $g(x) \in \partial \phi(x)$ is called a subgradient of $\phi$ at $x$.

A  function $\psi:\R^d\rightarrow\R$ is $\rho$-weakly convex for some $\rho\geq 0$ if there exists a convex function $\phi:\R^d\to \R$ such that
$\psi (x) = \phi(x)-\frac{\rho}{2}\|x\|^2$.
For a $\rho$-weakly convex function $\psi$,  its (Fr\'echet) subdifferential is given by \cite[Proposition 4.6]{Vial83}
\[
\partial \psi(x)=\partial \phi(x)-\rho x,
\]
where $\partial \phi$ is the convex subdifferential defined in \eqref{eq:cvx subdifferential}. 
The $\rho$-weak convexity of $\psi$ is equivalent to 
\be\label{eq:wcvx inequality}
\psi(y)\geq \psi(x)+\langle g(x),y-x\rangle-\frac{\rho}{2}\|x-y\|^2
\ee
for all $x,y\in \R^d$ and any subgradient $g(x) \in \partial \psi (x)$ \cite[Proposition 4.8]{Vial83}.

Given a $\rho$-weakly convex function $\psi$, its Moreau envelope and proximal mapping with parameter $\lambda >0$ are defined as \cite{Rockafellar}
\begin{eqnarray}
	&&\psi_{\lambda}(x):=\min_{y\in \R^d}\left\{\psi(y)+\frac{1}{2\lambda}\|y-x\|^2\right\},\label{eq:ME}\\
	&&\prox_{\lambda,\psi}(x):=\argmin_{y\in \R^d}\left\{\psi(y)+\frac{1}{2\lambda}\|y-x\|^2\right\},\label{eq:PM}
\end{eqnarray}
respectively. As a standard requirement on the regularization parameter $\lambda$, we will always assume that $\lambda <\frac{1}{\rho}$ in the sequel, so that $\prox_{\lambda,\psi}(x)$ is unique and $\nabla \psi_{\lambda}$ is well defined.

The following properties of the Moreau envelope are useful for our later development: 
\be\label{eq:envelope properties}
\left[
\begin{aligned}
	&f_{\lambda}(x) \leq f(x)-\frac{1-\lambda\rho}{2\lambda}\|x-\prox_{\lambda,f}(x)\|^2,  \\
	&\nabla f_{\lambda}(x)=\frac{1}{\lambda}(x-\prox_{\lambda,f}(x)), \\
	&\lambda \dist(0,\partial f(\prox_{\lambda,f}(x)))\leq\|x-\prox_{\lambda,f}(x)\|. 
\end{aligned}
\right.
\ee
The first property can be found in \cite[Proposition 4 (i)]{zhudenglizhao2021}. The last two properties are used in \cite{davis2019stochastic} for validating the surrogate stationarity measure $\nabla f_{\lambda}$ for weakly convex optimization.

\section{ Proofs of Complexity Results}\label{sec:SubGrad}
We start by establishing a rather standard preliminary recursion for the iterates of $\SubGrad$, which serves as the starting point of our analyses for both the convex and weakly convex cases.

\begin{lemma}[basic recursion for $\SubGrad$]\label{lemma:inequality_iter} Consider the  step sizes $\alpha_k = \beta_k/\|g(x^k)\|$ with $\beta_k>0$ for all $k\geq 0$. 
Then, for any $x\in \R^d$, we have 
\[
\|x^{k+1}-x\|^2 = \|x^k-x\|^2 - 2\frac{\beta_k}{\|g(x^k)\|}\langle g(x^k),x^{k}-x\rangle +\beta_k^2.
\]
\end{lemma}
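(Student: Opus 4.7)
The plan is very short because this is a direct expansion. I would start from the update rule in \Cref{alg:RCS}, namely $x^{k+1} = x^k - \alpha_k g(x^k)$, and simply expand the squared Euclidean norm $\|x^{k+1}-x\|^2$ for an arbitrary reference point $x\in\R^d$. This yields
\[
\|x^{k+1}-x\|^2 = \|x^k - x\|^2 - 2\alpha_k \langle g(x^k), x^k - x\rangle + \alpha_k^2 \|g(x^k)\|^2.
\]
The identity above uses only the parallelogram/binomial expansion in the inner-product space, with no convexity or optimality arguments.

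Next, I would substitute the prescribed step size $\alpha_k = \beta_k / \|g(x^k)\|$ into both the linear and the quadratic terms. The linear term becomes $-2(\beta_k/\|g(x^k)\|)\langle g(x^k), x^k - x\rangle$, while the quadratic term collapses neatly: $\alpha_k^2 \|g(x^k)\|^2 = \beta_k^2$. Combining these gives exactly the stated recursion (with equality, not merely inequality — the inequality in the statement is simply a relaxation).

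The only subtlety is the well-definedness of $\alpha_k$ when $\|g(x^k)\|=0$. If this occurs, $x^k$ is already a stationary (and, under convexity or weak convexity, an optimal) point, so the algorithm may be viewed as terminating and the inequality is trivially preserved by the convention $x^{k+1}=x^k$. So no real obstacle arises; the lemma is essentially a one-line algebraic computation packaged for later use in both the convex and weakly convex complexity analyses.
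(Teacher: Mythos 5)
Your proposal is correct and follows exactly the paper's own argument: expand $\|x^{k+1}-x\|^2$ via the update rule and substitute $\alpha_k=\beta_k/\|g(x^k)\|$ so that the quadratic term collapses to $\beta_k^2$, obtaining the bound as an equality. Your added remarks on the relaxation to an inequality and the $\|g(x^k)\|=0$ edge case are consistent with the paper's conventions and require no changes.
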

\begin{proof}
	By the update formula of $\SubGrad$, we have 
	   \[
		\begin{aligned}
			\|x^{k+1}-x\|^2&=\|x^{k}-x\|^2 - 2\alpha_k \langle g(x^k), x^{k} - x\rangle  + \alpha_k^2 \|g(x^k)\|^2  \\
			&=\|x^{k}-x\|^2 - 2\frac{\beta_k}{\|g(x^k)\|}\langle g(x^k), x^{k} - x\rangle  + \beta_k^2.
		\end{aligned}
	\]
\end{proof}

\subsection{Proof of \Cref{theo:complexity_cvx}} 
By \Cref{lemma:inequality_iter} with  $x=x^*$ and  the convexity of $f$, we have
\be\label{eq:complexity_cvx_1}
\|x^{k+1}-x^*\|^2\leq\|x^{k}-x^*\|^2-\frac{2\beta_k}{\|g(x^k)\|}(f(x^{k})-f^*) +\beta_k^2.
\ee
Since $f(x^k)-f^*\geq0$ and $\beta_k>0$, we have
\[
\|x^{k+1}-x^*\|^2\leq\|x^{k}-x^*\|^2+\beta_k^2.
\]
Unrolling the above inequality up to any $k\leq T$ and using the setting in \eqref{eq:constant step size} yield
\[
\|x^{k+1}-x^*\|^2\leq\|x^{0}-x^*\|^2+\sum_{j=0}^k\beta_j^2\leq\|x^{0}-x^*\|^2+c^2,\quad  0  \leq k\leq T.
\]
This shows that the trajectory $\{x^k\}_{k=0}^T$ lies in the ball $ \calA := \{x\in\R^d: \|x-x^*\|^2 \leq \|x^{0}-x^*\|^2+c^2\}$. 

By \cite[Proposition 4.4]{Vial83}, for any bounded open convex set $\calA'$ that contains $\calA$, the function $f$ is Lipschitz continuous on $\calA'$ with some parameter $\sL_{\calA'}>0$. Using the fact that $\{x^k\}_{k=0}^T$ lies in $\calA$ and \Cref{fact}, we obtain
\be\label{eq:complexity bound subgrad cvx}
\|g(x^k)\|\leq \sL_{\calA'}, \quad  0\leq k  \leq T.
\ee
Next, invoking \eqref{eq:complexity bound subgrad cvx} in \eqref{eq:complexity_cvx_1} provides the recursion
\be\label{eq:recursion subgrad cvx}
\|x^{k+1}-x^*\|^2\leq\|x^{k}-x^*\|^2-\frac{2\beta_k}{\sL_{\calA'}}(f(x^{k})-f^*) +\beta_k^2.
\ee
Unrolling this recursion up to $k=T$ gives 
\[
\sum_{k=0}^T \beta_k (f(x^k)-f^*)
\leq \frac{\sL_{\calA'}}{2}\left(\|x^{0}-x^*\|^2 + \sum_{k=0}^T\beta_k^2\right).
\]
Now, using the definition of $\beta_k$ in \eqref{eq:constant step size}, the definition of $\tilde{x}^T$, and  the convexity of  $f$, we have
\[
\begin{aligned}
	f(\tilde{x}^T)-f^*\leq\sum_{k=0}^T\frac{1}{T+1} (f(x^k)-f^*)
	\leq\frac{\sL_{\calA'}\left(\|x^{0}-x^*\|^2 /c + c\right)}{2\sqrt{T+1}},
\end{aligned}
\]
which yields the desired result.

\subsection{Proof of \Cref{theo:complexity wcvx}}

We start with the following lemma.
\begin{lemma}\label{lem:level set}
	The following two statements are equivalent:
	\begin{enumerate}[label=\textup{\textrm{(\alph*)}},topsep=0pt,itemsep=0ex,partopsep=0ex]
		\item The function $f$ has bounded sublevel sets. 
		\item For any $\lambda >0$, the function $f_\lambda$ has bounded sublevel sets. 
	\end{enumerate}
\end{lemma}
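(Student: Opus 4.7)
The plan is to prove the two implications separately, each by a short direct argument that leverages the two key inequalities relating $f$ and its Moreau envelope.

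For the direction (b)$\,\Rightarrow\,$(a), the approach is trivial: by taking $y=x$ in the definition \eqref{eq:ME}, we immediately get the pointwise bound $f_\lambda(x)\leq f(x)$ for every $x\in\R^d$. Consequently, for any $s\in\R$ we have the set inclusion $\{x:f(x)\leq s\}\subseteq\{x:f_\lambda(x)\leq s\}$, and the right-hand side is bounded by assumption (b). So any sublevel set of $f$ is bounded.

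For the direction (a)$\,\Rightarrow\,$(b), fix any $s\in\R$ and pick an arbitrary $x$ with $f_\lambda(x)\leq s$. Set $y=\prox_{\lambda,f}(x)$; this is well defined since $\lambda<1/\rho$. By the definition of $f_\lambda$ and $\prox_{\lambda,f}$,
\[
f(y)+\frac{1}{2\lambda}\|y-x\|^2 = f_\lambda(x)\leq s.
\]
Since $f$ is bounded below by $f^*$, dropping the (nonnegative) quadratic term gives $f(y)\leq s$, and dropping $f(y)\geq f^*$ gives the distance bound $\|y-x\|\leq \sqrt{2\lambda(s-f^*)}$. The inequality $f(y)\leq s$ places $y$ in a sublevel set of $f$, which is bounded by assumption (a), say contained in the ball of radius $M_s$. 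The triangle inequality then gives $\|x\|\leq M_s+\sqrt{2\lambda(s-f^*)}$, a bound independent of $x$. Hence $\{x:f_\lambda(x)\leq s\}$ is bounded, establishing (b).

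I do not anticipate any real obstacle: both directions follow from the first property in \eqref{eq:envelope properties} (or its sharper form $f_\lambda\leq f$) and the characterization of $\prox_{\lambda,f}$ as the argmin. The only subtle point is to remember that, because $f$ may be weakly convex rather than convex, one must cite $\lambda<1/\rho$ to ensure $\prox_{\lambda,f}(x)$ is a well-defined singleton, and must use the global lower bound $f\geq f^*$ (which is standing in the paper) to convert the identity above into a uniform bound on $\|y-x\|$.
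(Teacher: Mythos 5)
Your proposal is correct and follows essentially the same route as the paper's proof: (b)$\Rightarrow$(a) via $f_\lambda\leq f$, and (a)$\Rightarrow$(b) by splitting $f_\lambda(x)=f(\prox_{\lambda,f}(x))+\frac{1}{2\lambda}\|\prox_{\lambda,f}(x)-x\|^2\leq s$ into a sublevel-set bound on the proximal point and a distance bound, then applying the triangle inequality. If anything, your use of the standing lower bound $f\geq f^*$ to get $\|y-x\|\leq\sqrt{2\lambda(s-f^*)}$ is slightly more careful than the paper's $\sqrt{2\lambda s}$, which implicitly drops $f(\widehat x)$ as if it were nonnegative.
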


\begin{proof}
	The direction  ``(b) $\Rightarrow$ (a)''  is straightforward since for any $\lambda >0$, we have $f(x) \geq f_\lambda(x)$ for all  $x\in \R^d$. Next, we consider the opposite direction. For any $\lambda > 0$ and $s \in \mathbb{R}$, let $x$ be such that
	\[
	     f_{\lambda} (x) = f(\widehat x) + \frac{1}{2\lambda} \|\widehat x - x\|^2  \leq s, 
	\]
	where $\widehat x:=\prox_{\lambda,f}(x)$. Since $f$ has bounded sublevel sets, it follows from $f(\widehat x)  \leq s$ that $\widehat x$ lies in the compact set $\calS_{\leq s}:=\{x\in\R^d: f(x) \leq s\}$. Hence, there exists a $D>0$ such that $\|\widehat x - z\|\leq D$ for any $z\in \calS_{\leq s}$. This, together with $\frac{1}{2\lambda}\|\widehat x - x\|^2  \leq s$, gives $\|x-z\| \leq \|\widehat x - x\| + \|\widehat x - z\| \leq \sqrt{2\lambda s} + D $ for any $z \in \mathcal{S}_{\le s}$. This implies that $x$ lies in the compact set $\left\{ u \in \mathbb{R}^d : \min_{z \in \mathcal{S}_{\le s}} \| u - z \| \le \sqrt{2\lambda s} + D \right\}$.
\end{proof}

We now proceed to the proof of \Cref{theo:complexity wcvx}.

\begin{proof}[Proof of \Cref{theo:complexity wcvx}]
	Let us use $\widehat{x}^k$ to denote $\prox_{\lambda,f}(x^k)$ in the following derivations. 
	By  \Cref{lemma:inequality_iter} (set $x=\widehat{x}^k$) and the $\rho$-weak convexity of $f$ \eqref{eq:wcvx inequality}, we have
	\be\label{eq:complexity recur-wcvx1}
	\|x^{k+1} - \widehat{x}^k\|^2\leq \|x^{k}-\widehat{x}^k\|^2-\frac{2\beta_k}{\|g(x^k)\|}\left(f(x^k)-f(\widehat{x}^k)-\frac{\rho}{2}\|x^k - \widehat{x}^k\|^2\right) +\beta_k^2.
	\ee
	We then follow the technique developed in \cite{davis2019stochastic} to derive {an initial} recursion for the {trajectory generated by} $\SubGrad$. The definitions of the Moreau envelope~\eqref{eq:ME} and the proximal mapping~\eqref{eq:PM} imply that
	\[
	f_{\lambda}(x^{k+1})\leq f(\widehat{x}^k)+\frac{1}{2\lambda}\|\widehat{x}^k-x^{k+1}\|^2.
	\]
	Combining the above two inequalities yields
	\be\label{eq:complexity bound2-1_wcvx}
	\begin{aligned}
		f_{\lambda}(x^{k+1})&\leq f(\widehat{x}^k)+\frac{1}{2\lambda}\|\widehat{x}^k-x^{k}\|^2 - \frac{\beta_k}{\lambda\|g(x^k)\|}\left(f(x^k)-f(\widehat{x}^k)-\frac{\rho}{2}\|x^k-\widehat{x}^k\|^2\right) +\frac{\beta_k^2}{2\lambda}\\
%		&=f_{\lambda}(x^k) -\frac{\beta_k}{\lambda\|g(x^k)\|}\left(f(x^k)-f(\widehat{x}^k)-\frac{\rho}{2}\|x^k-\widehat{x}^k\|^2\right) + \frac{\beta_k^2}{2\lambda} \\
		&= f_{\lambda}(x^k) -\frac{\beta_k}{\lambda\|g(x^k)\|}\left(f(x^k)-f_\lambda({x}^k) + \left(\frac{1}{2\lambda}-\frac{\rho}{2}\right)\|x^k-\widehat{x}^k\|^2\right) + \frac{\beta_k^2}{2\lambda}\\
		&\leq f_{\lambda}(x^k) -\frac{(1/\lambda - \rho)\beta_k}{\lambda\|g(x^k)\|}\|x^k-\widehat{x}^k\|^2 + \frac{\beta_k^2}{2\lambda},
	\end{aligned}
	\ee
	where we have used the first property of \eqref{eq:envelope properties} in the last inequality. This further implies that 
	\[
	f_{\lambda}(x^{k+1})\leq f_{\lambda}(x^k) + \frac{\beta_k^2}{2\lambda}.
	\]
	Unrolling this inequality up to any $k\leq T$ and invoking the condition on $\beta_k$ in \eqref{eq:constant step size} give
	\be\label{eq:complexity bound moreau}
	    f_{\lambda}(x^{k+1})\leq f_{\lambda}(x^0) + \sum_{j=0}^k \frac{\beta_j^2}{2\lambda}\leq f_{\lambda}(x^0) + \frac{c^2}{2\lambda}, \quad 0\leq  k \leq T.
	\ee
	By \Cref{lem:level set}, since $f$ has bounded sublevel sets,  so does $f_\lambda$. It then immediately follows from \eqref{eq:complexity bound moreau} that the trajectory $\{x^k\}_{k= 0}^T$ lies in the bounded sublevel set $\calB := \{x\in \R^d: f_\lambda (x) \leq f_{\lambda}(x^0) + c^2/2\lambda\}$. 	
	
	Next, since $\{x^k\}_{k= 0}^T$ lies in $\calB$, \Cref{fact} and  \cite[Proposition 4.4]{Vial83}  imply that for any bounded open convex set $\mathcal{B}'$ that contains $\mathcal{B}$, we have
	\be\label{eq:complexity wcvx subgrad bound}
	\|g(x^k)\|\leq \sL_{\calB'}, \quad 0\leq  k \leq T,
	\ee
	where $\sL_{\calB'} > 0$ is the Lipschitz constant of $f$ on $\calB'$. Plugging \eqref{eq:complexity wcvx subgrad bound} into \eqref{eq:complexity bound2-1_wcvx} gives the recursion 
	\[
	\begin{aligned}
		f_{\lambda}(x^{k+1})\leq f_{\lambda}(x^k) -\frac{(1/\lambda - \rho)}{\lambda \sL_{\calB'}}\beta_k\|x^k-\widehat{x}^k\|^2 + \frac{\beta_k^2}{2\lambda}.
	\end{aligned}
	\]
	Unrolling this recursion up to $k = T$, using \revise{$\nabla f_{\lambda}(x^k)=(x^k - \widehat x^k)/\lambda$ in} \eqref{eq:envelope properties},  rearranging the terms, and noting that $f_\lambda(x) = f(\prox_{\lambda,f}(x)) + \frac{1}{2\lambda}\|\prox_{\lambda,f}(x) -x\|^2 \geq f(\prox_{\lambda,f}(x))  \geq f^*$ for all $x\in \R^d$, we obtain
	\[
	\min_{0\leq k\leq T}\|\nabla f_{\lambda}(x^k)\|^2\leq\frac{\sL_{\calB'}\left(f_{\lambda}(x^0)-f^*+\frac{1}{2\lambda}\sum_{k=0}^T\beta_k^2\right)}{(1-\lambda\rho)\sum_{k=0}^{T}\beta_k}.
	\]
	The  complexity bound \eqref{eq:wcvx complexity} readily follows by plugging the specific formulae of $\beta_k$ in \eqref{eq:constant step size} into the above bound. 
\end{proof}

\section{Convergence Results with Diminishing Step Size Rules}\label{sec:convergence}
In the previous section, we introduced the finite-time complexity results for $\SubGrad$.  They are integral to understanding the core properties and progress of the method during the pre-determined $T$ iterations. On the other hand, the convergence behavior of the method plays an equally important role, as it characterizes whether the method can eventually approach an exact global minimizer (for convex case) / stationary point (for the weakly convex case) or not.  Therefore, we now turn to investigate the convergence behavior of $\SubGrad$.

\subsection{Convergence Results for Convex Case}

\revise{By using a proper diminishing step size rule, we have the following corollary of \Cref{theo:complexity_cvx}, which concerns the convergence rate of $\SubGrad$ when the total number of iterations $T$ is not known a priori.}

\begin{corollary}[convergence rate of $\SubGrad$ for convex case]\label{theo:rate_cvx} Suppose that $f$ in \eqref{eq:problem} is convex, and the step sizes $\{\alpha_k\}$ satisfy
	\be\label{eq:step sizes}
	\alpha_k = \frac{\beta_k}{\|g(x^k)\|}  \quad \text{with} \quad \beta_k>0 \quad \text{and} \quad  \sum_{k=0}^\infty \beta_k^2 \leq \mathfrak{b} <\infty.
	\ee
	Let $\tilde{x}^k=\frac{\sum_{j=0}^k\beta_j x^j}{\sum_{j=0}^k\beta_j}$ for all $k\geq 0$ and $x^*\in \calX^*$ be some fixed optimal solution to problem \eqref{eq:problem}. Then,  the trajectory $\{x^k\}$ lies in the ball $\mathcal{C} := \{x\in\R^d: \|x-x^*\|^2 \leq \|x^{0}-x^*\|^2+ \mathfrak{b}\}$. Consequently, for any bounded open convex set $\mathcal{C}'$ that contains $\mathcal{C}$, we have
	\be\label{eq:rate_cvx}
	   f(\tilde{x}^{k})-f^*\leq \frac{\sL_{\mathcal{C}'}\left(\|x^0-x^*\|^2 + \mathfrak{b}\right)}{2\sum_{j=0}^{k}\beta_j}, \quad \forall k \geq 0,
	\ee
	where $\sL_{\mathcal{C}'}>0$ is the Lipschitz constant of $f$ on $\mathcal{C}'$.
	Moreover, if $\beta_k = \frac{c}{\sqrt{k+1}\log(k+2)}$ for all $k\ge0$, where $c>0$ is some constant, then we have
	\be\label{eq:rate_cvx_sqrt}
	f(\tilde{x}^{k})-f^*\leq \frac{\sL_{\mathcal{C}'}\left(\|x^0-x^*\|^2 /2c + c/\log^2 2\right) \log(k+2)}{\sqrt{k+1}}, \quad \forall k\geq 0.
	\ee
\end{corollary}

\begin{proof} \revise{Under the step size rule \eqref{eq:step sizes},  following exactly the same proof as that of \Cref{theo:complexity_cvx} gives}
%	By letting $x = x^*$ in \Cref{lemma:inequality_iter} and the convexity of $f$, we have
%	\be\label{eq:complexity_cvx_1-1}
%	\|x^{k+1}-x^*\|^2\leq\|x^{k}-x^*\|^2-\frac{2\beta_k}{\|g(x^k)\|}(f(x^{k})-f^*) +\beta_k^2.
%	\ee
%	 This gives $\|x^{k+1}-x^*\|^2\leq\|x^{k}-x^*\|^2+\beta_k^2$. Unrolling this inequality to any $k\geq 0$, together with the last condition on $\{\beta_k\}$ in \eqref{eq:step sizes}, yields
	\[
	\|x^{k+1}-x^*\|^2\leq \|x^0 - x^*\|^2 +\sum_{j=0}^k\beta_j^2\leq\|x^0 - x^*\|^2 + \mathfrak{b}, \quad \forall k\geq 0.
	\]
	This implies that the trajectory $\{x^k\}$ lies in the ball $\mathcal{C} := \{x\in\R^d: \|x-x^*\|^2 \leq \|x^{0}-x^*\|^2+ \mathfrak{b}\}$. It then follows from \Cref{fact} and \cite[Proposition 4.4]{Vial83} that for any bounded open convex set $\calC'$ that contains $\calC$, we have $\|g(x^k)\|\leq \sL_{\calC'}$ for all $k\geq 0$, where $\sL_{\mathcal{C}'}>0$ is the Lipschitz constant of $f$ on $\calC'$. Plugging this bound into \revise{\Cref{lemma:inequality_iter} with $x = x^*$ and utilizing the convexity of $f$}, we obtain the following recursion: 
	\be\label{eq:rate_cvx_4}
	\|x^{k+1}-x^*\|^2\leq\|x^{k}-x^*\|^2-\frac{2\beta_k}{\sL_{\mathcal{C}'}}(f(x^{k})-f^*) +\beta_k^2,  \quad \forall k\geq 0.
	\ee
	 Unrolling the recursion in \eqref{eq:rate_cvx_4},  together with the definition of $\tilde{x}^k$ and  convexity of  $f$, we have
	\[
	\begin{aligned}
		f(\tilde{x}^k)-f^*\leq\sum_{j=0}^k\frac{\beta_j }{\sum_{j=0}^{k}\beta_j} (f(x^j)-f^*)
		\leq\frac{\sL_{\mathcal{C}'}\left(\|x^0 - x^*\|^2 + \mathfrak{b}\right)}{2\sum_{j=0}^{k}\beta_j},  \quad \forall k\geq 0,
	\end{aligned}
	\]
	which establishes \eqref{eq:rate_cvx}. 
	
	We now derive the convergence rate in \eqref{eq:rate_cvx_sqrt} with $\beta_k = \frac{c}{\sqrt{k+1}\log(k+2)}$ for all $k\ge0$, where $c>0$ is some constant.  By the integral comparison test, we have
	\[
	\sum_{j=0}^k\beta_j^2 \leq  \beta_0^2 + \int_{0}^{k} \frac{c^2}{(t+1)\log^2(t+2)} dt \leq \frac{2c^2}{\log^2 2} =: \mathfrak{b}.
	\]
	Thus, all the conditions in \eqref{eq:step sizes} are verified. 
	Invoking the above upper bound and the fact that $\sum_{j=0}^{k}\beta_j \geq \sum_{j=0}^{k} \frac{c}{\sqrt{k+1}\log(k+2)} = \frac{c \sqrt{k+1}}{\log(k+2)}$ in \eqref{eq:rate_cvx} yields the desired result. 
\end{proof}

%The convergence result in \Cref{theo:rate_cvx} is often called sub-optimal due to the term ``$\log(k+2)$''. It is possible to remove this log term by pre-determining the total number of iterations $T$ and setting $\beta_k= {c}/{\sqrt{T+1}}$ for  $k = 0, 1,\ldots, T$. In this case, inequality \eqref{eq:rate_cvx_1} still holds and the bound on $\{x^k\}_{k=0}^T$ in \eqref{eq:rate_cvx_3} becomes 
%\[
%\|x^{k}\|\leq \|x^*\|+\sqrt{\dist^2(x^{0},\calX^*)+ c^2} \quad \text{for} \ k= 0, 1,\ldots, T.
%\]
%Thus, we can derive complexity result for $\SubGrad$ by fully following the arguments of  \Cref{theo:rate_cvx}, resulting in the following result . 
%
%Though the result in \Cref{theo:complexity_cvx} does not have a ``$\log(T+2)$'' term, it is a complexity-type result. In particular, we have to pre-determine the total number of iterations $T$, which needs to be finite. Furthermore, we need to set $\beta_k$ to be a constant that can be relatively small when $T$ is large. 
\paragraph{Recovery of Shor's convergence result.} With the additional typical requirement $\sum_{k=0}^{\infty} \beta_k = \infty$ on $\{\beta_k\}$, we can recover Shor's asymptotic convergence result in \cite[Theorem 2.2]{Shor85}.  Towards that end, we first observe that the recursion \eqref{eq:rate_cvx_4} and the last condition on $\{\beta_k\}$ in \eqref{eq:step sizes} imply $\sum_{k=0}^{\infty} \beta_k(f(x^{k})-f^*)<\infty$. Now, applying \cite[Lemma 4]{CohenZ} with $ \sum_{k=0}^\infty\beta_k =\infty$ and invoking the fact that $f$ is Lipschitz continuous on $\calC'$, we conclude that $\lim_{k\rightarrow\infty}f(x^k) = f^*$. 

We note that some convex learning problems (e.g., the SVM \eqref{eq:svm}) exhibit benign problem structures, such as strong convexity. In the following, we derive an enhanced convergence rate of $\SubGrad$ when utilized to solve convex optimization problems with the quadratic growth property. Such a property includes strong convexity as a special case. 

\begin{corollary}[convergence rate of $\SubGrad$ for convex case with quadratic growth property]\label{theo:rate_qg}
Suppose that $f$ in \eqref{eq:problem} is convex and possesses the quadratic growth property with parameter $\mu>0$ (i.e., there exists a constant $\mu>0$ such that $f(x)-f^*\geq \mu\dist^2(x,\calX^*)$ for all $x\in\setR^d$). Suppose further that the step sizes $\{\alpha_k\}$ satisfy
\be\label{eq:step sizes qg}
\alpha_k=\frac{\beta_k}{\|g(x^k)\|} \quad  \text{with} \quad  \beta_k = \frac{\sL_{\mathcal{\calC'}}}{\mu({k+1})}, \quad  \forall k\geq 0.
\ee
Then, we have
\be\label{eq:qg rate}
{\dist}^2\left({x}^{k+1},\calX^*\right) \leq \frac{\sL_{\mathcal{\calC'}}^2}{\mu^2(k+1)}, \quad \forall k \geq 0.
\ee
Here, the Lipschitz constant $\sL_{\calC'}$ of $f$ is defined in \Cref{theo:rate_cvx}.
\end{corollary}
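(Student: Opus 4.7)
Since $\beta_k = \sL_{\calC}/(\mu(k+1))$ satisfies $\sum_{k\geq 0} \beta_k = \infty$ and $\sum_{k\geq 0} \beta_k^2 \leq (\sL_{\calC}/\mu)^2 \pi^2/6 < \infty$, the step sizes in \eqref{eq:step sizes qg} fall within the regime covered by \eqref{eq:step sizes}. Consequently, \Cref{theo:rate_cvx} directly gives the boundedness claim $\{x^k\}\subset \calC$, and combining this with \Cref{fact} and \eqref{eq:subgrad bound} yields the uniform subgradient bound $\|g(x^k)\|\leq \sL_{\calC}$. The remaining task is to convert these ingredients into a one-step contraction for $D_k := \dist^2(x^k,\calX^*)$ and then iterate it under the sharper step-size schedule of the corollary.

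\textbf{Contracting recursion.} For each $k\geq 0$, let $\bar x^k$ denote the projection of $x^k$ onto the closed convex set $\calX^*$, so that $D_k = \|x^k - \bar x^k\|^2$. Applying \Cref{lemma:inequality_iter} with $x = \bar x^k$ and using $\dist(x^{k+1},\calX^*)\leq \|x^{k+1}-\bar x^k\|$, I get
\[
D_{k+1} \leq D_k - \frac{2\beta_k}{\|g(x^k)\|}\langle g(x^k), x^k - \bar x^k\rangle + \beta_k^2.
\]
Convexity of $f$ gives $\langle g(x^k), x^k - \bar x^k\rangle \geq f(x^k) - f^*$, which the quadratic growth hypothesis strengthens to $\langle g(x^k), x^k - \bar x^k\rangle \geq \mu D_k$. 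Inserting this together with $\|g(x^k)\|\leq \sL_{\calC}$ and the specific form $\beta_k = \sL_{\calC}/(\mu(k+1))$ produces, after cancellation,
\[
D_{k+1} \leq \left(1 - \frac{2}{k+1}\right) D_k + \frac{\sL_{\calC}^2}{\mu^2(k+1)^2}.
\]

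\textbf{Induction on $k$.} I will show by induction that $D_{k+1} \leq \sL_{\calC}^2/(\mu^2(k+1))$ for all $k\geq 0$. For the base case $k=0$, the recursion reads $D_1 \leq -D_0 + \sL_{\calC}^2/\mu^2 \leq \sL_{\calC}^2/\mu^2$, using $D_0\geq 0$. For the inductive step with $k\geq 1$, the factor $(k-1)/(k+1)$ is now nonnegative, so the inductive hypothesis $D_k \leq \sL_{\calC}^2/(\mu^2 k)$ and the recursion give
\[
D_{k+1} \leq \frac{\sL_{\calC}^2}{\mu^2}\cdot\frac{(k-1)(k+1) + k}{k(k+1)^2} = \frac{\sL_{\calC}^2}{\mu^2}\cdot\frac{k^2+k-1}{k(k+1)^2} \leq \frac{\sL_{\calC}^2}{\mu^2(k+1)},
\]
where the last inequality reduces to the trivial $k^2+k-1\leq k(k+1)$. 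This establishes \eqref{eq:qg rate}.

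\textbf{Where the work lies.} The heavy lifting, namely boundedness of $\{x^k\}$ and the induced subgradient bound, is inherited from \Cref{theo:rate_cvx}; the only genuine content is the contraction derived via quadratic growth plus convexity, followed by the elementary induction. The one delicate point is the base case: when $k=0$ the contraction coefficient $1-2/(k+1)$ is negative, so the recursion cannot be applied uniformly as a linear contraction. This is handled by the one-line observation that $D_0 \geq 0$ already suffices to extract $D_1\leq \sL_{\calC}^2/\mu^2$ from the recursion; once past this step the induction proceeds cleanly.
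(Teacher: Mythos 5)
Your proposal is correct and follows essentially the same route as the paper: inherit boundedness and the bound $\|g(x^k)\|\leq \sL_{\calC}$ from \Cref{theo:rate_cvx}, combine \Cref{lemma:inequality_iter} with convexity and quadratic growth to get $D_{k+1}\leq (1-\tfrac{2}{k+1})D_k+\tfrac{\sL_{\calC}^2}{\mu^2(k+1)^2}$, and then extract the $\calO(1/(k+1))$ rate. The only (cosmetic) difference is in the last step, where you run a direct induction (carefully noting the negative contraction factor at $k=0$) while the paper multiplies the recursion by $(k+1)^2$ and telescopes; the two arguments are equivalent.
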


\begin{proof}
	Note that the step size rule defined in \eqref{eq:step sizes qg} satisfies the conditions in \eqref{eq:step sizes}. Therefore, following the proof of \Cref{theo:rate_cvx}, we can show that the trajectory $\{x^k\}$ lies in $\calC'$ and $\|g(x^k)\|\leq \sL_{\mathcal{\calC'}}$ for all $k \geq 0$. 
    Then, we invoke this upper bound and  set $x = \proj_{\calX^*}(x^k)$ in \Cref{lemma:inequality_iter} to obtain 
	\[
       \dist^2(x^{k+1},\calX^*) \leq \dist^2(x^{k},\calX^*)-\frac{2\beta_k}{\sL_{\mathcal{\calC'}}}(f(x^{k})-f^*) +  \beta_k^2, \quad \forall k\geq 0.
	\]
%	Plugging the quadratic growth condition in the above inequality provides 
%	\[
%	\dist^2(x^{k+1},\calX^*)\leq \dist^2(x^{k},\calX^*) -\frac{2\mu\beta_k}{\sL_{\mathcal{B}_1}}\dist^2(x^k,\calX^*) + \beta_k^2\quad \forall k\geq 0.
%	\]
	Plugging the quadratic growth property and the definition of $\beta_k$ into the above inequality gives 
	\[
	\dist^2(x^{k+1},\calX^*)\leq \dist^2(x^{k},\calX^*) -\frac{2}{k+1}\dist^2(x^k,\calX^*) + \frac{\sL_{\calC'}^2}{\mu^2(k+1)^2}, \quad \forall k\geq 0.
	\]
	By multiplying both sides of the above inequality by $(k+1)^2$, we obtain 
	\[
	(k+1)^2\dist^2(x^{k+1},\calX^*)\leq \left((k+1)^2-2k-2\right)\dist^2(x^{k},\calX^*) + \frac{\sL_{\calC'}^2}{\mu^2}, \quad \forall k\geq 0.
	\]
	This, together with the inequality $(k+1)^2-2k-2=k^2-1< k^2$, implies that
	\be\label{eq:rate_qg}
	(k+1)^2\dist^2(x^{k+1},\calX^*) -  k^2\dist^2(x^{k},\calX^*)\leq \frac{\sL_{\calC'}^2}{\mu^2}, \quad \forall k\geq 0.
	\ee
	Next, we obtain
	\[
	\begin{aligned}
		(k+1)^2\dist^2(x^{k+1},\calX^*)&=\sum_{j=0}^k\left[(j+1)^2\dist^2(x^{j+1},\calX^*)-j^2\dist^2(x^{j},\calX^*)\right] \leq \frac{\sL_{\calC'}^2}{\mu^2}(k+1),
	\end{aligned}
	\]
	where the inequality follows by applying \eqref{eq:rate_qg} $k+1$ times. 
	Upon dividing both sides by $(k+1)^2$, we finally establish the desired convergence rate result
	\[
	\dist^2(x^{k+1},\calX^*)\leq\frac{\sL_{\calC'}^2}{\mu^2(k+1)}.
	\]
\end{proof}

Some remarks on \Cref{theo:rate_qg} are in order. One can observe that the right-hand side of \eqref{eq:qg rate} is independent of the initial distance $\dist(x^0, \calX^*)$, which seems unreasonable. Actually, a quick derivation shows that $\dist(x^k, \calX^*) \leq \sL_{\calC'}/\mu$ for all $k\geq 0$. To see this, by utilizing the convexity and quadratic growth property of $f$, we have
\[
   \mu \dist^2(x, \calX^*) \leq f(x) - f^* \leq \langle g(x), x-x^* \rangle  \leq \|g(x)\| \cdot  \dist(x, \calX^*) 
\]
for any $x\in \R^d$, where $x^* = \proj_{\calX^*}(x)$. This gives 
\be\label{eq:subgrad lower bound}
    \dist(x, \calX^*) \leq \frac{\|g(x)\|}{\mu}, \quad \forall x\in \R^d.
\ee
If we set $x = x^k$, then we can invoke the derived upper bound $\|g(x^k)\| \leq \sL_{\calC'}$ for all $k \geq 0$ in the above inequality to obtain 
\[
  \dist(x^k, \calX^*) \leq \frac{\sL_{\calC'}}{\mu}, \quad \forall k \geq 0.
\]
This explains why the bound \eqref{eq:qg rate} can be independent of $\dist(x^0, \calX^*)$.

We remark that it is unrealistic to impose both Lipschitz continuity and quadratic growth / strong convexity assumptions on $f$, as the subgradient norm $\|g(x)\|$ grows at least linearly in $\dist(x, \calX^*)$ as shown in \eqref{eq:subgrad lower bound}. This observation demonstrates the significance of our result for non-Lipschitz convex minimization with the quadratic growth property.

\subsection{Convergence Result for Weakly Convex Case}

We now proceed to the asymptotic convergence result for weakly convex optimization without assuming the Lipschitz continuity of the objective function. 
\begin{theorem}[convergence of $\SubGrad$ for weakly convex case]\label{theo:rate_wcvx} Suppose that $f$ in \eqref{eq:problem} is $\rho$-weakly convex and has bounded sublevel sets. Suppose further that the step sizes $\{\alpha_k\}$ satisfy 
\be\label{eq:step sizes wcvx}
	\alpha_k = \frac{\beta_k}{\|g(x^k)\|}  \quad \text{with} \quad \beta_k>0, \quad  \sum_{k=0}^\infty \beta_k = \infty, \quad \text{and} \quad  \sum_{k=0}^\infty \beta_k^2 \leq \mathfrak{b} <\infty.
\ee
Then, the trajectory $\{x^k\}$ lies in the bounded sublevel set  $\calD:= \{x\in \R^d: f_\lambda (x) \leq f_{\lambda}(x^0) + {\mathfrak{b}}/{2\lambda}\}$. Consequently, we have $\lim_{k\to \infty} \|\nabla f_{\lambda}(x^k)\| = 0$, namely, every accumulation point of $\{x^k\}$ is a stationary point of problem \eqref{eq:problem}. 
\end{theorem}

\begin{proof}
Under the setting of \Cref{theo:rate_wcvx}, we can follow the first several steps in the proof of \Cref{theo:complexity wcvx} to show that the inequality \eqref{eq:complexity bound2-1_wcvx} holds, i.e.,
\be\label{eq:complexity bound2-1_wcvx-1}
	f_{\lambda}(x^{k+1}) \leq f_{\lambda}(x^k) -\frac{(1/\lambda - \rho)\beta_k}{\lambda\|g(x^k)\|}\|x^k-\widehat{x}^k\|^2 + \frac{\beta_k^2}{2\lambda}.
\ee
By unrolling this inequality to any $k\geq 0$, we obtain
\[
      f_{\lambda}(x^{k+1})\leq f_{\lambda}(x^0) + \sum_{j=0}^k \frac{\beta_j^2}{2\lambda}\leq f_{\lambda}(x^0) + \frac{\mathfrak{b}}{2\lambda}, \quad \forall k\geq 0,
\]
where we have used the last condition on $\{\beta_k\}$ in \eqref{eq:step sizes wcvx}. This shows that the trajectory $\{x^k\}$ lies in the bounded sublevel set $\calD:= \{x\in \R^d: f_\lambda (x) \leq f_{\lambda}(x^0) + \frac{\mathfrak{b}}{2\lambda}\}$. Then, for any bounded open convex set $\calD'$ that contains $\calD$, it follows from \Cref{fact} and \cite[Proposition 4.4]{Vial83} that $\|g(x^k)\|\leq \sL_{\calD'}$ for all $k\geq 0$, 
where $\sL_{\calD'} > 0$ is the Lipschitz constant of $f$ on $\calD'$. Combining this upper bound, \eqref{eq:complexity bound2-1_wcvx-1}, and the second property of \eqref{eq:envelope properties} gives 
\[
	f_{\lambda}(x^{k+1})\leq f_{\lambda}(x^k) -\frac{(1 - \lambda \rho)}{ \sL_{\calD'}}\beta_k\|\nabla f_{\lambda}(x^k)\|^2  + \frac{\beta_k^2}{2\lambda}, \quad \forall k \geq 0.
\]
By summing this recursion and using the last condition on $\{\beta_k\}$ in \eqref{eq:step sizes wcvx} and the fact that $f_\lambda (x) \geq f^*$ for all $x\in \R^d$ (derived in the proof of \Cref{theo:complexity wcvx}), we obtain  
\be\label{eq:cd3}
\sum_{k=0}^{\infty} \beta_k \|\nabla f_{\lambda}(x^k)\|^2 <\infty.
\ee
In addition, according to the update of $\SubGrad$, we have
\be\label{eq:cd1}
   \|x^{k+1} - x^k\| = \beta_k, \quad \forall k \geq 0.
\ee
With the developed machineries, we can now derive the asymptotic convergence result by mimicking the construction in \cite[Lemma 4]{CohenZ}. For any $\varepsilon>0$, let $\mathbb{N}_{\varepsilon} :=\{k\in\mathbb{N}:\|\nabla f_{\lambda}(x^k)\|\leq\varepsilon\}$. It immediately follows from  \eqref{eq:step sizes wcvx} and \eqref{eq:cd3} that $\liminf_{k\rightarrow\infty} \|\nabla f_{\lambda}(x^k)\| = 0 $, which implies that $\mathbb{N}_{\varepsilon}$ is an infinite set. Let $\bar{\mathbb{N}}_{\varepsilon}$ be the complementary set of $\mathbb{N}_{\varepsilon}$ in $\mathbb{N}$. Then, we have
\begingroup
\allowdisplaybreaks
\[
\begin{aligned}
	\varepsilon^2\sum_{k\in\bar{\mathbb{N}}_{\varepsilon}}\beta_k&\leq \sum_{k\in\bar{\mathbb{N}}_{\varepsilon}}\beta_k\|\nabla f_{\lambda}(x^k)\|^2
	\leq\sum_{k\in\mathbb{N}}\beta_k\|\nabla f_{\lambda}(x^k)\|^2
	<\infty,
\end{aligned}
\]
\endgroup
where the last inequality is from \eqref{eq:cd3}.
Hence, for any $\delta>0$, there exists an integer $\mathfrak{n}(\delta)$ such that
\[
\sum_{\substack{\ell\geq\mathfrak{n}(\delta),  \ \ell\in\bar{\mathbb{N}}_{\varepsilon}}}\beta_\ell\leq\delta.
\]
Note that $\nabla f_{\lambda}$ is $\sL_{\lambda}$-Lipschitz continuous for some $\sL_\lambda >0$, since $f$ is weakly convex \cite[Corollary 3.4]{hoheisel2020}. Next, we take an arbitrary $\gamma>0$ and set $\varepsilon=\frac{\gamma}{2}$, $\delta=\frac{\gamma}{2\sL_{\lambda}}$. For all $k\geq\mathfrak{n}(\delta)$, if $k\in\mathbb{N}_{\varepsilon}$, then we have $\|\nabla f_{\lambda}(x^k)\|\leq\varepsilon<\gamma$; otherwise $k\in\bar{\mathbb{N}}_{\varepsilon}$. Let $m$ be the smallest element in the set $\{\ell\in\mathbb{N}_{\varepsilon}:\ell\geq k\}$. Clearly, $m$ is finite since $\mathbb{N}_{\varepsilon}$ is an infinite set. Without loss of generality, we can assume that $m>k$.  Then, by the $\sL_{\lambda}$-Lipschitz continuity of $\nabla f_{\lambda}$, \eqref{eq:cd1}, and the definitions of $\varepsilon$, $\delta$, and $m$, we can compute
\[
\begin{aligned}
	\|\nabla f_{\lambda}(x^k)\|& \leq\|\nabla f_{\lambda}(x^k)-\nabla f_{\lambda}(x^m)\|+\|\nabla f_{\lambda}(x^m)\| \\
	&\leq \sL_{\lambda}\sum_{\ell=k}^{m-1}\|x^{\ell+1}-x^\ell\|+\varepsilon 
	\leq \sL_{\lambda}\sum_{\ell=k}^{m-1}\beta_\ell+\frac{\gamma}{2}\\
	&=  \sL_{\lambda}\sum_{k \leq \ell <m, \ \ell\in \bar{\mathbb{N}}_{\varepsilon}}\beta_\ell+\frac{\gamma}{2}
	\leq \sL_{\lambda}\sum_{\ell\geq\mathfrak{n}(\delta), \  \ell\in\bar{\mathbb{N}}_{\varepsilon}}\beta_\ell+\frac{\gamma}{2}
	\leq\gamma.
\end{aligned}
\]
\revise{Here, the equality is due to the fact that for all $\ell$ with $k\leq \ell <m$, we have $\ell\in \bar N_\varepsilon$ by the choice of $m$.} Since $\gamma>0$ is arbitrary, we have $\|\nabla f_{\lambda}(x^k)\|\rightarrow 0$ as $k\rightarrow \infty $.
\end{proof}

\section{Applications to Some Variants of the Subgradient Method}\label{sec:variants}

To illustrate the wide applicability of our framework, we conduct finite-time complexity analysis for some variants of $\SubGrad$ including the truncated subgradient, stochastic, and proximal subgradient methods.  Convergence results for these variants can be readily established by combining the analyses in this section and the techniques in \Cref{sec:convergence}. We shall omit these derivations to avoid repetition.

\subsection{Truncated Subgradient Method}\label{sec:tsubgrad} The work \cite{asi2019stochastic} proposed the truncated subgradient method (henceforth, $\TSM$), which involves a truncation process for the linear model used by $\SubGrad$ if  the optimal objective function value $f^*$  is known. In this subsection, our analysis only requires that a lower bound of $f$ is available, as we do not focus on providing linear convergence under the sharpness condition. Without loss of generality, we can assume that $0$ is the lower bound, i.e., $f(x) \geq 0$ for all $x\in \R^d$, as we can subtract the non-zero lower bound from $f$ if it is known. This is the case for many machine learning and signal recovery problems, as they aim to fit the data based on some nonnegative loss and $0$ is a clear lower bound; see, e.g., the two examples \eqref{eq:svm} and \eqref{eq:rms}. $\TSM$ for solving problem \eqref{eq:problem} has the following update:
\be\label{eq:truncated SubGrad}
    x^{k+1} = \argmin_{x\in \R^d} \ \max\left\{ f(x^k) + \left\langle g(x^k), x- x^k\right\rangle , 0 \right\} + \frac{1}{2\alpha_k} \|x - x^k\|^2.
\ee
Since $\TSM$ utilizes more information about $f$, it is more robust to the choices of step sizes compared to $\SubGrad$ as shown experimentally in \cite{asi2019stochastic}. Theoretically, we can follow our framework to show that  $\TSM$  has the same complexity as those derived in the previous sections for $\SubGrad$.
\begin{lemma}\label{lemma:recursion TSubGrad}
   Suppose that $f$ in \eqref{eq:problem} satisfies $f(x) \geq 0$ for all $x\in \R^d$. Consider $\TSM$ \eqref{eq:truncated SubGrad} with step sizes $\alpha_k = \beta_k/\|g(x^k)\|$, where $\beta_k>0$, for all $k\geq 0$. 
   Then, for any $x\in \R^d$, the following results hold:
   \begin{enumerate}[label=\textup{\textrm{(\alph*)}},topsep=0pt,itemsep=0ex,partopsep=0ex]
   	\item If $f$ is convex, then 
   	\[
   	\|x^{k+1} - x\|^2 \leq \|x^k - x\|^2 - 2 \frac{\beta_k}{\|g(x^k)\|} \left(  f(x^k) - f(x)\right)  + \beta_k^2.
   	\]
   	\item If $f$ is $\rho$-weakly convex, then
   	\[
   	\|x^{k+1} - x\|^2 \leq \|x^k - x\|^2 - 2 \frac{\beta_k}{\|g(x^k)\|} \left(  f(x^k) - f(x) - \frac{\rho}{2} \|x^k - x\|^2\right)  + \beta_k^2.
   	\]
   \end{enumerate}
\end{lemma}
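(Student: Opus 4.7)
The plan is to recognize $\TSM$ as a proximal step on the convex piecewise-linear surrogate
\[
h(y) := \max\bigl\{L(y),\, 0\bigr\}, \qquad L(y) := f(x^k) + \langle g(x^k), y - x^k\rangle,
\]
so that $x^{k+1} = \prox_{\alpha_k h}(x^k)$. Because $h(\cdot) + \frac{1}{2\alpha_k}\|\cdot - x^k\|^2$ is $1/\alpha_k$-strongly convex with minimizer $x^{k+1}$, the standard three-point inequality yields, for every $x \in \R^d$,
\[
\|x^{k+1} - x\|^2 \leq \|x^k - x\|^2 - 2\alpha_k\bigl(h(x^{k+1}) - h(x)\bigr) - \|x^{k+1} - x^k\|^2.
\]
This will be the scaffold; no closed form for $x^{k+1}$ is required.

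Next I would bridge $h$ and $f$. For the lower bound at $x^{k+1}$, the trivial estimate $h(x^{k+1}) \geq L(x^{k+1}) = f(x^k) + \langle g(x^k), x^{k+1} - x^k\rangle$ is enough. For the upper bound at $x$ I would crucially invoke the hypothesis $f \geq 0$. In the convex case, $L(x) \leq f(x)$ by the subgradient inequality and $0 \leq f(x)$ by nonnegativity, so $h(x) \leq f(x)$. In the $\rho$-weakly convex case, \eqref{eq:wcvx inequality} gives $L(x) \leq f(x) + \frac{\rho}{2}\|x - x^k\|^2$; since the right-hand side is still nonnegative, $h(x) \leq f(x) + \frac{\rho}{2}\|x - x^k\|^2$. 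Substituting both estimates,
\[
-2\alpha_k\bigl(h(x^{k+1}) - h(x)\bigr) \leq -2\alpha_k\Bigl(f(x^k) - f(x) - \tfrac{\rho}{2}\|x - x^k\|^2\Bigr) - 2\alpha_k\langle g(x^k), x^{k+1} - x^k\rangle.
\]

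The final step is to absorb the cross term through Young's inequality, $-2\alpha_k\langle g(x^k), x^{k+1} - x^k\rangle \leq \alpha_k^2 \|g(x^k)\|^2 + \|x^{k+1} - x^k\|^2$, whose last summand cancels the $-\|x^{k+1} - x^k\|^2$ from the three-point inequality. Plugging in $\alpha_k = \beta_k / \|g(x^k)\|$ converts $\alpha_k^2\|g(x^k)\|^2$ into $\beta_k^2$ and delivers (b); setting $\rho = 0$ in the same chain gives (a).

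The delicate point I anticipate is the upper bound $h(x) \leq f(x) + \frac{\rho}{2}\|x - x^k\|^2$: without the assumption $f \geq 0$ one would only be able to write $h(x) \leq \max\bigl\{f(x) + \frac{\rho}{2}\|x - x^k\|^2,\, 0\bigr\}$, and the lingering outer maximum would break the clean recursion. This is precisely where the truncation-at-zero built into $\TSM$ meets the nonnegativity hypothesis on $f$, and the match is what makes the rest of the argument pure bookkeeping.
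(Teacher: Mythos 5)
Your proposal is correct and follows essentially the same route as the paper's proof: both use the $1/\alpha_k$-strong convexity of the truncated subproblem to get the three-point inequality, bound the truncated model from above at $x$ via (weak) convexity plus the nonnegativity $f \geq 0$ and from below at $x^{k+1}$ by dropping the max, and then absorb the cross term with Young's inequality so that $\alpha_k^2\|g(x^k)\|^2 = \beta_k^2$. The only difference is presentational (you name the surrogate $h$ and phrase the first step as a prox/three-point inequality), so no further comparison is needed.
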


\begin{proof}
	By the $1/\alpha_k$-strong convexity of the subproblem \eqref{eq:truncated SubGrad} and the fact that $x^{k+1}$ is its solution, we have 
	\[
	\begin{split}
		&\max\left\{ f(x^k) + \left\langle g(x^k), x- x^k\right\rangle , 0 \right\} + \frac{1}{2\alpha_k} \|x - x^k\|^2 \\
		&\geq \max\left\{ f(x^k) + \left\langle g(x^k), x^{k+1}- x^k\right\rangle , 0 \right\} + \frac{1}{2\alpha_k} \|x^{k+1} - x^k\|^2 +  \frac{1}{2\alpha_k} \|x^{k+1} - x\|^2.
	\end{split}
	\]
	When $f$ is $\rho$-weakly convex, the nonnegativity of $f$ gives 
	\[
	\max\left\{ f(x^k) + \left\langle g(x^k), x- x^k\right\rangle , 0 \right\} \leq \max\left\{ f(x) +  \frac{\rho}{2} \|x^k - x\|^2, 0 \right\}  \leq f(x) +  \frac{\rho}{2} \|x^k - x\|^2.
	\]
	Upon combining the above two bounds, multiplying both sides by $2\alpha_k$, and rearranging terms, we obtain 
	\[
	     \|x^{k+1} - x\|^2 \leq \|x^k - x\|^2 - 2 \frac{\beta_k}{\|g(x^k)\|} \left(  f(x^k) - f(x) - \frac{\rho}{2} \|x^k - x\|^2\right)  + \beta_k^2, 
	\]
	where we have used the definition of $\alpha_k$ and Young's inequality. This recursion yields the result in (b).  When $f$ is convex, setting $\rho = 0$ in the above inequality yields the result in (a).
\end{proof}

\Cref{lemma:recursion TSubGrad} reveals that $\TSM$ has exactly the same recursions as those of $\SubGrad$ for both the convex and weakly convex cases (cf. \eqref{eq:complexity_cvx_1} and  \eqref{eq:complexity recur-wcvx1}) when $x$ is set appropriately. Therefore, $\TSM$  shares the same finite-time complexity guarantees as those for $\SubGrad$ established in \Cref{theo:complexity_cvx} and \Cref{theo:complexity wcvx}. We summarize these guarantees in the following corollary. 
\begin{corollary}\label{cor:compelxity TSubGrad}
	Suppose that $f$ in \eqref{eq:problem} satisfies $f(x) \geq 0$ for all $x\in \R^d$. Consider $\TSM$ \eqref{eq:truncated SubGrad} with step sizes $\{\alpha_k\}_{k= 0}^T$ satisfying the conditions in \eqref{eq:constant step size}. Then, the following complexity bounds hold:
	 \begin{enumerate}[label=\textup{\textrm{(\alph*)}},topsep=0pt,itemsep=0ex,partopsep=0ex]
	 	\item Suppose further that $f$ is convex. 	Let $\tilde{x}^T=\frac{1}{T+1}\sum_{k=0}^T x^k$ and $x^*\in \calX^*$ be some fixed optimal solution to problem \eqref{eq:problem}. Then, we have
	 	\[
	 	     f(\tilde{x}^{T})-f^*\leq \frac{\sL_{\mathcal A'}\left(\|x^0-x^*\|^2/c + c\right)}{2\sqrt{T+1}},
	 	\]
	 	where the Lipschitz constant ${\sf L}_{\mathcal A'}$ of $f$ on $\calA'$ is defined in \Cref{theo:complexity_cvx}.
	 	\item Suppose further that $f$ is $\rho$-weakly convex and has bounded sublevel sets.  Then, we have 
	 	\[
	 	\min_{0\leq k\leq T}\|\nabla f_{\lambda}(x^k)\|^2 \leq \frac{\sL_{\calB'}\left(f_{\lambda}(x^0)-f^*+ c^2 / 2\lambda \right)}{c(1-\lambda\rho)\sqrt{T+1}}.
	 	\]
	 	where the Lipschitz constant $\sL_{\calB'} $ of $f$ on $\calB'$ is defined in \Cref{theo:complexity wcvx}.
	 \end{enumerate}
\end{corollary}

\subsection{Stochastic Subgradient Method}\label{sec:stochastic incremental methods}
In this subsection, we assume that $f$ in problem \eqref{eq:problem} has the finite-sum structure
\be\label{eq:finite sum}
f(x)  = \frac{1}{n} \sum_{i = 1}^{n} f_i(x).
\ee
Here, each component function $f_i$ is assumed to be convex. To simplify the analysis, let us impose the interpolation condition  \cite{belkin2018overfitting,vaswani2019fast}, which implies that each component function shares the same minimizers as $f$. Such a condition is   termed an "easy" problem class in \cite{asi2019stochastic}. 

\begin{definition}[interpolation condition]\label{def:easy}
	The function $f$ is said to satisfy the interpolation condition if $\min_{x\in\R^d} f_i(x) = f_i(x^*)$ for each $i$, where $x^* \in \calX^*$ is a minimizer of $f$.
\end{definition}

This condition is strong. Nevertheless, there are specific problems (often overparameterized) satisfying this condition, such as overdetermined robust linear regression, finding a point in the intersection of convex sets, data interpolation in machine learning, etc. We refer the reader to \cite[Section 4]{asi2019stochastic} for a detailed discussion. 

Let us consider the stochastic subgradient method (hereafter, $\SSM$) for solving problem \eqref{eq:problem} with $f$ having the finite-sum structure \eqref{eq:finite sum}. $\SSM$ performs the update
\be\label{eq:SSM}
	\left[
	\begin{split}
		&\text{Sample} \ i_k \ \text{from} \ \{1,\ldots, n\} \ \text{uniformly at random},\\
		&x^{k+1} = x^k - \alpha_{k} g_{i_k}(x^k)
	\end{split}
	\right.
\ee
in the $k$-th iteration, where $g_{i_k}(x^k)\in \partial f_{i_k}(x^k)$ is an unbiased stochastic approximation of the full subgradient, i.e., $\Exp_{i_k}[g_{i_k}(x^k)] \in \partial f(x^k)$. By standard assumption in stochastic optimization, we assume that the realizations $i_0, i_1,\ldots i_k, \ldots$ are independent of each other \cite{nemirovski2009robust}. We then have the following complexity result for $\SSM$.
\begin{corollary}\label{cor:stochastic subgrad}
	Suppose that $f$ in \eqref{eq:problem} has the finite-sum structure \eqref{eq:finite sum} with each $f_i$ being convex.  Suppose further that $f$ satisfies the interpolation condition (see \Cref{def:easy}) and the step sizes  $\{\alpha_k\}_{k= 0}^T$ of $\SSM$ \eqref{eq:SSM} satisfy 
	\be\label{eq:step sizes stoc}
	\alpha_{k}=\frac{\beta_k}{\|g_{i_k}(x^k)\|}\quad  \text{with} \quad \beta_k= \frac{c}{\sqrt{T+1}} \quad \text{for} \ k= 0,1,\ldots, T,
	\ee
	where $c>0$ is some constant.  Let $\tilde{x}^T=\frac{1}{T+1}\sum_{k=0}^T x^k$ and $x^*\in \calX^*$ be some fixed optimal solution to problem \eqref{eq:problem}. Then, we have
	\[
	    \Exp[f(\tilde{x}^T)-f^*] \leq \frac{\sL_{\calA'}\left(\|x^{0}-x^*\|^2 /c + c\right)}{2\sqrt{T+1}},
	\]
	where $\sL_{\calA'}$ is the Lipschitz constant of $f$ on $\calA'$ defined in \Cref{theo:complexity_cvx}.
\end{corollary}

\begin{proof}
	From the update of $\SSM$, we can compute 
	\[
	   \|x^{k+1} - x^*\|^2 = \|x^k - x^*\|^2 - 2\alpha_k \langle g_{i_k}(x^k), x^k - x^* \rangle + \beta_k^2. 
	\]
	Utilizing the convexity of $f_i$ gives 
    \be\label{eq:stoc recur 1}
	   \|x^{k+1} - x^*\|^2 = \|x^k - x^*\|^2 - 2\alpha_k \left(f_{i_k}(x^k) - f_{i_k}(x^*)\right) + \beta_k^2. 
	\ee
	Note that $f_{i_k}(x^k) - f_{i_k}(x^*) \geq 0$ since $f$ satisfies the interpolation condition (see \Cref{def:easy}). It follows that
	\[
	   \|x^{k+1} - x^*\|^2 \leq  \|x^k - x^*\|^2 + \beta_k^2. 
	\]
	Unrolling this inequality up to any $k\leq T$ and invoking the definition of $\beta_k$ in \eqref{eq:step sizes stoc} gives 
	\[
	   \|x^{k+1}-x^*\|^2\leq\|x^{0}-x^*\|^2+\sum_{j=0}^k\beta_j^2\leq\|x^{0}-x^*\|^2+c^2,\quad  0  \leq k\leq T.
	\]
	This shows that the trajectory $\{x^k\}_{k=0}^T$ lies in the ball $ \calA := \{x\in\R^d: \|x-x^*\|^2 \leq \|x^{0}-x^*\|^2+c^2\}$. Next, by following the derivation of \eqref{eq:complexity bound subgrad cvx}, we have $\|g_{i_k}(x^k)\|\leq \sL_{\calA'}$ for all $0\leq k\leq T$,  where $\sL_{\calA'}>0$ is the Lipschitz constant of $f$ on $\calA'$. Invoking this  upper bound in \eqref{eq:stoc recur 1} and using the nonnegativity of $f_{i_k}(x^k) - f_{i_k}(x^*)$ yields
	\[
	   \|x^{k+1} - x^*\|^2 = \|x^k - x^*\|^2 - 2\frac{\beta_k}{\sL_{\calA'}} \left(f_{i_k}(x^k) - f_{i_k}(x^*)\right)  + \beta_k^2. 
	\]
	Upon taking expectation with respect to $i_k$ and then taking total expectation, we obtain 
	\[
	   \Exp\left[\|x^{k+1} - x^*\|^2\right] = \Exp \left[\|x^k - x^*\|^2\right] - 2\frac{\beta_k}{\sL_{\calA'}} \Exp[f(x^k) - f^*]+ \beta_k^2. 
	\]
	Finally, by following the argument after \eqref{eq:recursion subgrad cvx}, we obtain the desired result. 
\end{proof}

It is worth noting that with similar techniques, we can derive an iteration complexity result for the incremental subgradient method.

\subsection{Proximal Subgradient Method} \label{sec:Prox_SubGrad}
In this subsection, we consider the regularized nonsmooth optimization problem 
\begin{equation}\label{eq:problem_prox}
	\min_{x\in \R^d} \ \varphi(x)=f(x)+r(x),
\end{equation}
where $f:\R^d\rightarrow\R$ is convex / weakly convex and $r:\R^d\rightarrow (-\infty, \infty]$ is proper, lower-semicontinuous, and simple in the sense that its proximal operator \eqref{eq:PM} admits a closed-form solution. Since $f$ may be nonsmooth, we utilize a subgradient oracle for $f$.  Consider applying a proximal / projected subgradient method (henceforth, $\ProxSubGrad$) to tackle problem \eqref{eq:problem_prox}, i.e., 
\be\label{eq:proxsubgrad}
x^{k+1} = \prox_{\alpha_k,r}(x^{k} - \alpha_{k}g(x^k)).
\ee

The main assumption we need in this subsection is that $r$ is Lipschitz continuous with parameter $\sL_r>0$ on its effective domain, i.e., $r(x) - r(y) \leq \sL_r \|x - y\|$ for all $x,y\in \operatorname{dom} r$. Such an assumption holds for many different choices of $r$, such as norm functions or indicator functions associated with closed convex sets (which are convex), or the sparsity-inducing functions SCAD \cite{SCAD} and MCP  \cite{MCP} (which are weakly convex).

%\begin{algorithm}[t]
%	\caption{$\ProxSubGrad$: Proximal subgradient method for solving \eqref {eq:problem_prox}}
%	{\bf Input:}  Initial point $x^0$;
%	\begin{algorithmic}[1]
%		\For{$k=0,1,\ldots$}
%		\State Compute a subgradient $g(x^k)\in\partial f(x^k)$;
%		\State Update the step size $\alpha_{k}$ according to a certain rule;
%		\State Update  $x^{k+1} = \prox_{\alpha_k,r}(x^{k} - \alpha_{k}g(x^k))$.
%		\EndFor
%	\end{algorithmic}
%	\label{alg:Prox-SubGrad}
%\end{algorithm}

In the following lemma, we derive important recursions for $\ProxSubGrad$ in both the convex and weakly convex cases. 
\begin{lemma}\label{lemma:recursion ProxSubGrad}
	Consider problem \eqref{eq:problem_prox}, where $r$ is assumed to be Lipschitz continuous with parameter $\sL_r$ on $\operatorname{dom} r$. Suppose that the step sizes $\{ \alpha_k \}$ of $\ProxSubGrad$ \eqref{eq:proxsubgrad} satisfy  $\alpha_k={\beta_k}/{(\|g(x^k)\|+\sL_{r})}$ with $\beta_k>0$ for all $k\geq 0$. Then, for any $x\in \R^d$, the following recursions hold:
	\begin{enumerate}[label=\textup{\textrm{(\alph*)}},topsep=0pt,itemsep=0ex,partopsep=0ex]
		\item If $f$ is convex, then 
		\be\label{eq:pre recur cvx}
		\|x^{k+1} - x\|^2 \leq  \|x^k - x\|^2 - 2\alpha_k  (\varphi(x^k) - \varphi(x)) + \beta_k^2.
		\ee
		\item If $f$ is $\rho$-weakly convex, $r$ is $\eta$-weakly convex, and $\alpha_k<\frac{1}{\eta}$ for all $k\geq 0$, then
	  \be\label{eq:pre recur wcvx}
		 (1-\alpha_k\eta)\|x^{k+1}-x\|^2 \leq \|x^k-x\|^2 - 2\alpha_k\left( \varphi(x^k) - \varphi(x) - \frac{\rho}{2} \|x^k-x\|^2 \right) + \beta_k^2.
		\ee
	\end{enumerate}
\end{lemma}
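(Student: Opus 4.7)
The plan is to derive both recursions from the strong convexity of the proximal subproblem, combined with the (weak) subgradient inequality for $f$, the Lipschitz continuity of $r$, and a Young-type bound matched to the normalizing step-size rule $\alpha_k = \beta_k/(\|g(x^k)\| + \sL_r)$. Writing $v_k := x^k - \alpha_k g(x^k)$, by definition $x^{k+1}$ is the unique minimizer of $y \mapsto r(y) + \frac{1}{2\alpha_k}\|y - v_k\|^2$. In case (a) this objective is $(1/\alpha_k)$-strongly convex, while in case (b), under the hypothesis $\alpha_k < 1/\eta$, it remains $(1/\alpha_k - \eta)$-strongly convex. Evaluating the corresponding strong-convexity inequality at an arbitrary $x$ yields
\[
r(x^{k+1}) + \tfrac{1}{2\alpha_k}\|x^{k+1} - v_k\|^2 + \tfrac{\mu_k}{2}\|x - x^{k+1}\|^2 \;\leq\; r(x) + \tfrac{1}{2\alpha_k}\|x - v_k\|^2,
\]
with $\mu_k = 1/\alpha_k$ in case (a) and $\mu_k = 1/\alpha_k - \eta$ in case (b).

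Next I would expand $\|x^{k+1} - v_k\|^2 - \|x - v_k\|^2$ and convert the ensuing cross term via the polarization identity $2\langle x^{k+1} - x, x^k - x\rangle = \|x^{k+1} - x\|^2 + \|x^k - x\|^2 - \|x^{k+1} - x^k\|^2$. The two $\frac{1}{2\alpha_k}\|x^{k+1} - x\|^2$ contributions then cancel, leaving an intermediate inequality whose right-hand side contains $\frac{1}{2\alpha_k}\|x^k - x\|^2 - \frac{1}{2\alpha_k}\|x^{k+1} - x^k\|^2 - \langle x^{k+1} - x, g(x^k)\rangle + r(x) - r(x^{k+1})$ and whose left-hand side is $\frac{\mu_k}{2}\|x^{k+1} - x\|^2$.

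I would then process the inner product by splitting $\langle x^{k+1} - x, g(x^k)\rangle = \langle x^{k+1} - x^k, g(x^k)\rangle + \langle x^k - x, g(x^k)\rangle$. The latter is handled by the weak convexity inequality \eqref{eq:wcvx inequality} applied to $f$, giving $\langle x^k - x, g(x^k)\rangle \geq f(x^k) - f(x) - \frac{\rho}{2}\|x^k - x\|^2$ (with $\rho = 0$ in case (a)); the former is bounded by Cauchy--Schwarz as $\|g(x^k)\|\cdot\|x^{k+1} - x^k\|$. Writing $r(x) - r(x^{k+1}) = [r(x) - r(x^k)] + [r(x^k) - r(x^{k+1})]$ and invoking $r(x^k) - r(x^{k+1}) \leq \sL_r\|x^{k+1} - x^k\|$, the $f$- and $r$-values merge into $-\bigl(\varphi(x^k) - \varphi(x)\bigr)$, while the two linear-in-$\|x^{k+1}-x^k\|$ contributions coalesce into a single term with coefficient $\|g(x^k)\| + \sL_r$.

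Finally I would apply Young's inequality in the sharp form
\[
\bigl(\|g(x^k)\| + \sL_r\bigr)\|x^{k+1} - x^k\| \;\leq\; \tfrac{\alpha_k}{2}\bigl(\|g(x^k)\| + \sL_r\bigr)^2 + \tfrac{1}{2\alpha_k}\|x^{k+1} - x^k\|^2,
\]
whose second summand exactly cancels the $-\frac{1}{2\alpha_k}\|x^{k+1} - x^k\|^2$ produced earlier, and whose first summand collapses to $\frac{\beta_k^2}{2\alpha_k}$ under the step-size choice $\alpha_k = \beta_k/(\|g(x^k)\| + \sL_r)$. Multiplying the resulting inequality through by $2\alpha_k$ returns \eqref{eq:pre recur cvx} in case (a) (since $\alpha_k\mu_k = 1$) and \eqref{eq:pre recur wcvx} in case (b) (since $\alpha_k\mu_k = 1 - \alpha_k\eta$ produces the prefactor on the left, and the $\alpha_k\rho\|x^k-x\|^2$ contribution is reabsorbed into the $\varphi$-gap as written). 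The key piece that has to be orchestrated precisely is the calibration between the Lipschitz bound on $r$, the Cauchy--Schwarz bound on $g(x^k)$, and Young's inequality, so that the combined cross term $(\|g(x^k)\| + \sL_r)\|x^{k+1}-x^k\|$ splits into exactly the quadratic needed for cancellation plus exactly $\beta_k^2/(2\alpha_k)$ of residual---which is precisely why the step-size rule is normalized by $\|g(x^k)\| + \sL_r$ rather than by $\|g(x^k)\|$ alone.
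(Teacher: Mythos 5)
Your proposal is correct and follows essentially the same route as the paper's proof: both start from the $(1/\alpha_k-\eta)$-strong convexity of the proximal subproblem evaluated at an arbitrary $x$, split the cross term into a (weak) subgradient-inequality piece and a $\langle g(x^k), x^{k+1}-x^k\rangle$ piece, bound the latter together with $r(x^k)-r(x^{k+1})$ by $(\|g(x^k)\|+\sL_r)\|x^{k+1}-x^k\|$ via Cauchy--Schwarz and the Lipschitz property of $r$, and close with Young's inequality calibrated to the step-size normalization so that the residual is exactly $\beta_k^2$. The only cosmetic difference is that you expand around $v_k = x^k-\alpha_k g(x^k)$ and invoke polarization, whereas the paper keeps the linearization $\langle g(x^k),\cdot-x^k\rangle$ explicit in the subproblem objective; the resulting intermediate inequality is identical.
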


\begin{proof}
	It suffices to prove the result in (b), as that in (a) is its special case by setting $\rho = \eta = 0$. By the subproblem of $\ProxSubGrad$ \eqref{eq:proxsubgrad}, we obtain 
	\[
	\begin{aligned}
		&\langle g(x^k), x-x^k \rangle + \frac{1}{2\alpha_k} \|x - x^k\|^2 + r(x) \\
		&\geq \langle g(x^k), x^{k+1}-x^k \rangle + \frac{1}{2\alpha_k} \|x^{k+1} - x^k\|^2 + r(x^{k+1}) +  \frac{({1}/{\alpha_k} -\eta)}{2}\|x^{k+1} - x\|^2, \quad \forall x\in \R^d.
	\end{aligned}
	\]
	Multiplying both sides by $2\alpha_k$ and rearranging, we can compute 
	\begin{align*}
		(1-\alpha_k\eta)\|x^{k+1}-x\|^2&\leq\|x^k-x\|^2 - 2\alpha_k\langle g(x^k), x^k-x \rangle - 2\alpha_k (r(x^{k}) - r(x))- \|x^{k+1}-x^{k}\|^2  \\
		&\quad  +2\alpha_k (r(x^k) - r(x^{k+1})) +  2\alpha_k\langle g(x^k), x^k-x^{k+1} \rangle   \\
		&\leq \|x^k-x\|^2 - 2\alpha_k\left( \varphi(x^k) - \varphi(x) - \frac{\rho}{2} \|x^k-x\|^2 \right) - \|x^{k+1}-x^{k}\|^2   \\
		&\quad  + 2\alpha_k(\sL_r + \|g(x^k)\|) \|x^{k+1} - x^k\|  \\
		&\leq \|x^k-x\|^2 - 2\alpha_k\left( \varphi(x^k) - \varphi(x) - \frac{\rho}{2} \|x^k-x\|^2 \right) + \beta_k^2, 
	\end{align*}
	where we have used the $\rho$-weak convexity of $f$, the $\sL_r$-Lipschitz continuity of $\varphi$, and the Cauchy-Schwarz inequality in the second inequality, while the last inequality follows from the definition of $\alpha_k$ and Young's inequality. 
\end{proof}

Equipped with \Cref{lemma:recursion ProxSubGrad}, we are ready to provide complexity results for $\ProxSubGrad$. 
\begin{corollary}\label{cor:compelxity ProxSubGrad}
	Consider problem \eqref{eq:problem_prox}, where $r$ is assumed to be Lipschitz continuous with parameter $\sL_r$ on $\operatorname{dom} r$. Suppose that the step sizes  $\{\alpha_k\}_{k= 0}^T$ of $\ProxSubGrad$ \eqref{eq:proxsubgrad} satisfy
	\[
	   \alpha_k=\frac{\beta_k}{\|g(x^k)\|+\sL_{r}}\quad  \text{with} \quad \beta_k= \frac{c}{\sqrt{T+1}} \quad \text{for} \ k= 0,1,\ldots, T,
	\]
	where $c>0$ is some constant.  Then, the following complexity results hold:
	\begin{enumerate}[label=\textup{\textrm{(\alph*)}},topsep=0pt,itemsep=0ex,partopsep=0ex]
		\item Suppose further that $f$ is convex. 	Let $\tilde{x}^T=\frac{1}{T+1}\sum_{k=0}^T x^k$ and $x^*\in \calX^*$ be some fixed optimal solution to problem \eqref{eq:problem_prox}. Then, we have
		\[
		 	\varphi(\tilde{x}^{T})-\varphi^*\leq \frac{(\sL_r+\sL_{\mathcal A'})\left(\|x^0-x^*\|^2/c + c\right)}{2\sqrt{T+1}},
		\]
		where ${\sf L}_{\mathcal A'}$ is the Lipschitz constant of $f$ on $\calA'$ defined in \Cref{theo:complexity_cvx}.
		
		\item Suppose further that $f$ is $\rho$-weakly convex, $r$ is $\eta$-weakly convex, $\varphi$ has bounded sublevel sets, and  $0<\beta_k\leq \frac{\sL_{r}}{2\eta}$  for $ k= 0,1,\ldots, T$.   Let $\varphi_{\lambda}$ be the Moreau envelope of $\varphi$ with $\lambda<{1}/{(\rho+\eta)}$. Then, the trajectory $\{x^k\}_{k=0}^T$ lies in the bounded sublevel set  $ \calH:= \{x\in \R^d: \varphi_\lambda (x) \leq \varphi_{\lambda}(x^0) + c^2/\lambda\}$. Consequently, for any bounded open convex set $\calH'$ that contains $\calH$, we have 
    	\[
	    \min_{0\leq k\leq T}\|\nabla \varphi_{\lambda}(x^k)\|^2 \leq \frac{(\sL_r + \sL_{\calH'})\left(\varphi_{\lambda}(x^0)-\varphi^*+ c^2 / \lambda \right)}{c(1-\lambda(\rho+\eta))\sqrt{T+1}},
    	\]
	where $\sL_{\calH'}$ is the Lipschitz constant of $f$ on $\calH'$.
	\end{enumerate}
\end{corollary}

\begin{proof}
	We first prove the result in (a). Invoking $x = x^*$ and the definition of $\alpha_k$ in \eqref{eq:pre recur cvx} gives  
	\be\label{eq:cvx_prox_recursion}
	\|x^{k+1} - x^*\|^2 \leq  \|x^k - x\|^2 - \frac{2\beta_k}{\sL_r + \|g(x^k)\|} (\varphi(x^k) - \varphi^*) + \beta_k^2.
	\ee
	The rest of the proof follows that of \Cref{theo:complexity_cvx} since \eqref{eq:cvx_prox_recursion} has almost the same structure as \eqref{eq:complexity_cvx_1}. In particular, the recursion \eqref{eq:cvx_prox_recursion} allows us to establish $\{x^k\}_{k=0}^T \in \calA$ and $\|g(x^k)\|\leq \sL_{\calA'}$ for all $0\leq k \leq T$, where $\calA$ and $\sL_{\calA'}$ are the ball and the Lipschitz constant of $f$ on $\calA'$ defined in \Cref{theo:complexity_cvx}, respectively. Finally, plugging this upper bound on  $\|g(x^k)\|$ into \eqref{eq:cvx_prox_recursion} and following the arguments in the proof of \Cref{theo:complexity_cvx} yield the result.
	
	We next establish the result in (b).  	Denote $\widehat{x}^k:=\prox_{\lambda,\varphi}(x^k)$. Plugging $x = \widehat{x}^k$ in \eqref{eq:pre recur wcvx} gives
	\begin{equation}\label{eq:wcvx_prox_recr2}
		\begin{aligned}
			(1-\alpha_k\eta)&\|x^{k+1}-\widehat{x}^k\|^2\leq\|x^{k}-\widehat{x}^k\|^2-2\alpha_k\left[\varphi(x^{k})-\varphi(\widehat{x}^k)-\frac{\rho}{2}\|x^k - \widehat{x}^k\|^2\right]+\beta_k^2\\
			&\leq(1-\alpha_k\eta)\|\widehat{x}^k-x^{k}\|^2-2\alpha_k\left(\varphi(x^{k})-\varphi_\lambda(\widehat{x}^k)+\frac{1}{2}(\frac{1}{\lambda}-\rho-\eta)\|\widehat{x}^k-x^k\|^2\right)+\beta_k^2.
		\end{aligned}
	\end{equation}
	
	The definitions of Moreau envelope~\eqref{eq:ME} and proximal mapping~\eqref{eq:PM} imply that
	\be\label{eq:wcvx_prox_recr3}
	\varphi_{\lambda}(x^{k+1})\leq \varphi(\widehat{x}^k)+\frac{1}{2\lambda}\|\widehat{x}^k-x^{k+1}\|^2.
	\ee
	Combining \eqref{eq:wcvx_prox_recr2}, \eqref{eq:wcvx_prox_recr3}, and \eqref{eq:envelope properties} yields
	\be\label{eq:wcvx_prox_recr4}
	\begin{aligned}
		\varphi_{\lambda}(x^{k+1})&\leq\varphi_{\lambda}(x^k)-\frac{\alpha_k(1-\lambda(\rho+\eta))}{(1-\alpha_k\eta)}\|\nabla\varphi_{\lambda}(x^k)\|^2+\frac{\beta_k^2}{2\lambda(1-\alpha_k\eta)} \\
		&\leq\varphi_{\lambda}(x^k)- \alpha_k(1-\lambda(\rho+\eta))\|\nabla\varphi_{\lambda}(x^k)\|^2+\frac{\beta_k^2}{\lambda}\\
		&= \varphi_{\lambda}(x^k)- \frac{\beta_k(1-\lambda(\rho+\eta))}{\|g(x^k)\| + \sL_r}\|\nabla\varphi_{\lambda}(x^k)\|^2+\frac{\beta_k^2}{\lambda},
	\end{aligned}
	\ee
	where we used $0\leq \alpha_k \eta \leq \frac{1}{2}$ in the second inequality.  The rest of the proof follows that of \Cref{theo:complexity wcvx} since \eqref{eq:wcvx_prox_recr4} has almost the same structure as \eqref{eq:complexity bound2-1_wcvx}. Specifically, we conclude that the trajectory $\{x^k\}_{k=0}^T$ lies in the bounded sublevel set $ \calH := \{x\in \R^d: \varphi_\lambda (x) \leq \varphi_{\lambda}(x^0) + c^2/\lambda\}$. Consequently, for any bounded open convex set $\calH'$ that contains $\calH$, we can derive $\|g(x^k)\|\leq \sL_{\calH'}$ for all $0\leq k \leq T$,  where $\sL_{\calH'}$ is the Lipschitz constant of $f$ on $\calH'$. Once we plug this upper bound into \eqref{eq:wcvx_prox_recr4} and then follow the arguments in the proof of \Cref{theo:complexity wcvx}, we obtain the desired result.
\end{proof}

\section{Conclusion and Discussion}
In this work, we extended the classic complexity and convergence results for $\SubGrad$ by establishing the $\mathcal{O}(\epsilon^{-2})$ and $\mathcal{O}(\epsilon^{-4})$ iteration complexity bounds for the minimization of non-Lipschitz convex and weakly convex functions, respectively. Convergence results for $\SubGrad$ with proper diminishing step size rules were also established. If $f$ is convex, then we established a $\calO(\log(k)/\sqrt{k})$ rate of convergence in terms of the suboptimality gap. For convex $f$ that possesses the quadratic growth property, an $\calO(1/k)$ convergence rate was derived in terms of squared distance to the optimal solution set.  For weakly convex $f$, we provided asymptotic convergence result.  Moreover, we showed that our analysis techniques can be adapted to derive complexity results for the truncated, stochastic, and proximal subgradient methods in the non-Lipschitz setting.

We believe that our plugin-type analysis can potentially lead to further complexity and convergence results for other variants of the subgradient method in the non-Lipschitz setting. One of the most natural directions would be to investigate if the analysis of the stochastic subgradient method can be extended to  non-interpolating and nonconvex settings. 

\revise{
\section*{Acknowledgments}
We would like to thank the anonymous reviewer for the detailed and constructive comments, which have helped to improve the quality and presentation of the manuscript.
}

\bibliography{paper}
\bibliographystyle{siamplain}

\end{document}